\newtheorem{thm}{Theorem}[section]
\newtheorem{cor}[thm]{Corollary}
\newtheorem{lem}[thm]{Lemma}
\newtheorem{pro}[thm]{Proposition}
\numberwithin{equation}{section}
\begin{document}

\title{\bf Three Favorite Edges Occurs Infinitely Often for One-Dimensional Simple Random Walk}
 \author{ Chen-Xu Hao$^1$, Ze-Chun Hu$^1$, Ting Ma$^1$ and Renming Song$^2$\\ \\
 {\small $^1$College of Mathematics, Sichuan  University,
 Chengdu 610065, China}\\
 {\small 476924193@qq.com; zchu@scu.edu.cn; matingting2008@scu.edu.cn }\\ \\
  {\small $^2$Department of Mathematics,
University of Illinois, Urbana, IL 61801, USA}\\
 {\small rsong@illinois.edu}}

 \date{}
\maketitle

 \noindent{\bf Abstract:}\quad For a one-dimensional simple symmetric random walk $(S_n)$,
an edge $x$ (between points $x-1$ and $x$) is called a favorite edge at time $n$ if its local time at $n$ achieves the maximum among all edges.
In this paper, we show that with probability 1 three favorite edges occurs infinitely often. Our work is inspired by T\'{o}th and Werner [Combin. Probab. Comput. {\bf 6} (1997) 359-369], and Ding and Shen [Ann. Probab. {\bf 46} (2018) 2545-2561],
disproves a conjecture mentioned in Remark 1 on page 368 of T\'{o}th and Werner [Combin. Probab. Comput. {\bf 6} (1997) 359-369].

\smallskip

\noindent {\bf Keywords and phrases: } Random walk, favorite edge, invariance principle for one-side local times, Wiener process.

\smallskip

\noindent {\bf Mathematics Subject Classification (2020)}\quad 60F15, 60J55.

\section{Introduction}

Let $(S_n)_{n\in\mathbb{N}}$
be a one-dimensional simple symmetric random walk with $S_0=0$. Following T\'{o}th and Werner \cite{TW97}, we define for any $i\geq 1$,
\begin{eqnarray*}
\widetilde{S}_i:=\dfrac{S_i+S_{i-1}+1}{2},
\end{eqnarray*}
which characterizes the edge of $i$-th jump (edge $x$ is between points $x-1$ and $x$), and also define the ``local time on the edge $x$ at time $n$" as follows:
\begin{eqnarray}\label{local-time-of-edge}
L(x,n):=\#\left\{1\leq j\leq n: \widetilde{S}_j=x\right\}.
\end{eqnarray}
Hereafter, $\# D$ denotes the cardinality of the set $D$.
An edge $x$ is called a favorite (or most visited) edge of the random walk at time $n$ if
$$
L(x,n)=\sup_{y\in{\mathbb{Z}}}L(y,n).
$$
The set of favorite edges of the random walk at time $n$ is denoted by $\mathcal{K}(n)$. $(\mathcal{K}(n))_{n\ge 1}$ is called the favorite edge process of the one-dimensional simple symmetric random walk.
We say that {\it three favorite edges} occurs at time $n$ if $\#\mathcal{K}(n)=3$.

\begin{thm}\label{mainthm}
For one-dimensional simple symmetric random walk, with probability $1$, three favorite edges occurs infinitely often.
\end{thm}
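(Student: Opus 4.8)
The plan is to follow the overall strategy of T\'oth--Werner and of Ding--Shen: reduce the statement to a positive-probability estimate for a three-way tie at a sequence of reference times growing geometrically, and then upgrade ``bounded-below probability at each scale'' to ``infinitely often almost surely'' by an independence/Borel--Cantelli argument. A useful preliminary reduction is purely combinatorial: since each step of the walk changes exactly one edge local time by $+1$ and the running maximum $\max_x L(x,n)$ is nondecreasing, the cardinality $\#\mathcal K(n)$ can increase only in unit steps (a new edge ties the current maximum) and otherwise resets to $1$ (a favorite edge advances); hence the first time $\#\mathcal K(n)\ge 3$ one in fact has $\#\mathcal K(n)=3$. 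It therefore suffices to produce the event $\{\#\mathcal K\ge 3\}$ infinitely often. I would next pass from $L$ to the one-sided up-crossing local time $U(x,n):=\#\{1\le j\le n:\ S_{j-1}=x-1,\ S_j=x\}$, using the elementary flux identity $L(x,n)=2U(x,n)-\mathbf{1}_{\{S_n\ge x\}}$ for $x\ge 1$ (and its reflection for $x\le 0$). Thus, up to a $\pm1$ parity correction carried by the indicator, the favorite edges are exactly the locations where the spatial profile $U(\cdot,n)$ attains its maximum. Choosing the reference times to be the inverse local times $\tau_r$ (the first time the local time at the origin vertex reaches level $r$), a version of the Ray--Knight theorem makes $\{U(x,\tau_r)\}_{x>0}$ and $\{U(x,\tau_r)\}_{x<0}$ two independent critical Galton--Watson processes with geometric offspring, started near level $r$.

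By the invariance principle for these one-sided local times, the rescaled profiles converge to independent squared Bessel processes of dimension zero started at $r$. This supplies the macroscopic landscape: with probability bounded below in $r$, one of the two sides strictly dominates, its profile has a single well-separated peak region carrying the global maximum, and the subdominant side stays below by a macroscopic gap. I would condition on this event, which localizes the entire favorite set inside one peak region lying on a single side of $S_{\tau_r}$; there the indicator in the identity above is constant, so that maximizing $L$ coincides with maximizing $U$ and no stray edge from the other side can tie.

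The heart of the argument is then a lattice-scale analysis near the peak. Because the continuum limit attains its maximum at a unique point almost surely, a three-way tie is invisible to the invariance principle and is a genuinely discrete phenomenon; the invariance principle must be supplemented by a local-limit-type analysis of the Galton--Watson profile in an $O(1)$ window around its maximal value. I would show that, conditionally on the macroscopic landscape, the fine structure of $U(\cdot,\tau_r)$ near its maximum behaves like a fixed excursion-near-its-maximum, for which the maximal value is attained at three or more edges with probability bounded below uniformly in $r$ (the number of sites at the discrete maximum is an $O(1)$ quantity that does not decay). Tracking the parity correction $\mathbf{1}_{\{S_{\tau_r}\ge x\}}$ across these sites shows that the coincidence survives in $L$, giving $\#\mathcal K(\tau_r)\ge 3$, and by the monotonicity reduction above this forces $\#\mathcal K=3$ at some earlier time.

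To obtain infinitely many occurrences I would work along a geometric sequence of scales $r_k$ and control the dependence between scales through the branching/renewal structure: the fine structure of the profile near the $k$-th peak is governed by local time freshly accumulated after $\tau_{r_{k-1}}$ and is asymptotically independent of the coarser scales, so that $\sum_k \mathbb{P}\big(\#\mathcal K\ge 3 \text{ near scale } r_k \mid \mathcal F_{k-1}\big)=\infty$ almost surely. The conditional (L\'evy) form of the second Borel--Cantelli lemma then yields three favorite edges infinitely often with probability $1$. I expect the main obstacle to be the lattice-scale tie estimate of the third paragraph: one must quantitatively bridge the continuum description, where ties have probability zero, and the discrete $\pm1$ fluctuations of the one-sided local time profile, keeping precise track of the indicator parity so that a three-way tie in $U$ does not collapse to a strict maximum in $L$. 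Establishing the requisite asymptotic independence across scales for the Borel--Cantelli step is the secondary difficulty.
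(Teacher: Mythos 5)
Your proposal has a genuine gap at its core, and it is precisely the point you flag as "the main obstacle": the lattice-scale tie estimate is not merely hard, it is false as stated. You claim that near the peak of the profile the maximum is attained at three or more edges "with probability bounded below uniformly in $r$". In fact each exact equality among values of the downcrossing profile near its maximum is a local-limit-theorem event: adjacent values at height $\approx h$ have fluctuations of order $\sqrt{h}$, so a single exact tie costs order $h^{-1/2}$ (this is exactly the content of the transition-probability estimate $\pi(i,j)\asymp h^{-1/2}$ used in the paper, Lemma 3.8(1)), and a three-way tie of the global maximum costs order $h^{-1}$. You are conflating the (true) statement that the number of sites within $O(\sqrt h)$ of the maximum is tight with the (false) statement that exact ties occur with probability bounded away from zero. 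This error then breaks your final step: along geometric scales $r_k=2^k$ the tie probabilities $\sim 1/r_k$ are summable, so your conditional Borel--Cantelli argument would point to \emph{finitely} often, not infinitely often. The correct structure, which the paper (following Ding--Shen) uses, is to count ties over \emph{all} heights $h\le H$ and all spatial locations: the events $A_{x,2h}^{(k)}$ of (3.1) give a first moment $EN_H\ge c\log H$ (Proposition 4.2) precisely because each height contributes $\sim h^{-1}$, and the divergence is only logarithmic. Since the events across nearby heights are strongly dependent, Borel--Cantelli is unusable; instead one bounds the second moment, $EN_H^2=O(\log^2H)$ (Corollary 4.6), and applies a Paley--Zygmund argument to get only $P(N=\infty)\ge\delta>0$ (Proposition 4.7).

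The second missing ingredient is how positive probability is upgraded to probability one. Your "asymptotic independence across scales" is the hand-wave standing in for what is actually the paper's main technical contribution: a transience theorem for the favorite edge process (Theorem 2.1), stating that $\min\{|x|:x\in\mathcal K(n)\}\gg \sqrt n(\log n)^{-\gamma}$ eventually a.s., whose proof requires establishing a new strong invariance principle for one-sided (downcrossing) local times, $\sup_x|\xi_D(x,n)-\eta_R(x,n)|=o(n^{1/4+\varepsilon})$ a.s. (Theorem 2.2), together with the Bass--Griffin localization of the maximum. Transience is what makes $\{f(3)=\infty\}$ amenable to a Kolmogorov 0-1 law argument, because the favorite edges at late times are determined by increments of the walk after any fixed time. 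Without a quantitative statement of this type, your reduction to a tail event does not go through; and without the second-moment machinery, no positive-probability statement is available in the first place.
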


Theorem \ref{mainthm} complements the result in \cite{TW97} which showed that eventually there are no more than three favorite edges, and disproves a conjecture
mentioned in \cite[Remark 1 on page 368]{TW97}.

For the related problem of the number of favorite sites of one-dimensional simple symmetric random walks, there are many more references (see Shi and T\'{o}th \cite{ST00} for an overview).
This problem was posed by Erd\H{o}s and R\'{e}v\'{e}sz \cite{ER84, ER87, ER91, Re90}. T\'{o}th \cite{To01} proved that there are no more than three favorite sites eventually. Ding and Shen \cite{DS18} proved that with probability 1 three favorite sites occurs infinitely often.

Besides the number of favorite sites, a series of papers focus on the asymptotic behavior of favorite sites, see \cite{BG85, CRS00, CS98,  LS04}.
In addition, there are a number of papers
on favorite sites for other processes
including Brownian motion, symmetric stable process, L\'{e}vy processes, random walks in random environments  and so on, see \cite{BES00, CDH18, Ei89, Ei97, EK02,  HS00, HS15, KL95, Le97, LXY19, Ma01}.

For papers on favorite sites of simple random walks in  higher dimensions,
we refer to \cite{Ab15, De05, DPRZ01, Ok16}.

Our proof of Theorem \ref{mainthm} is inspired  by \cite{DS18}, which in turn was inspired by \cite{To01, TW97}. Following \cite{To01}, we define the number of upcrossings and downcrossings
of the site $x$ by time $n$ to be
\begin{eqnarray*}
\xi_U(x,n):=\#\{0<k\leq n:S_k=x,S_{k-1}=x-1\},\\
\xi_D(x,n):=\#\{0<k\leq n:S_k=x,S_{k-1}=x+1\}
\end{eqnarray*}
respectively. It is easy to check that
\begin{eqnarray}\label{1.1}
\xi_U(x,n)-\xi_D(x-1,n)=\textbf{1}_{\{0<x\leq S_n\}}-\textbf{1}_{\{S_n<x\leq 0\}}.
\end{eqnarray}
Using (\ref{local-time-of-edge}) and  (\ref{1.1}), we can easily get (see
\cite{To01})
\begin{eqnarray}\label{UDcrossings}
L(x,n)&=&\#\left\{0<j\leq n:\widetilde{S}_j=x\right\}\nonumber\\
&=&\#\left\{0<j\leq n:S_j=x,S_{j-1}=x-1\right\}+\#\left\{0<j\leq n:S_j=x-1,S_{j-1}=x\right\}\nonumber\\
&=&\xi_U(x,n)+\xi_D(x-1,n)\nonumber\\
&=&2\xi_U(x,n)+\textbf{1}_{\{S_n<x\leq 0\}}-\textbf{1}_{\{0<x\leq S_n\}}\nonumber\\
&=&2\xi_D(x-1,n)+\textbf{1}_{\{0<x\leq S_n\}}-\textbf{1}_{\{S_n<x\leq 0\}}.
\end{eqnarray}

For $r\geq 1$, let $f(r)$ be the (possibly infinite) number of times when there are exactly $r$ favorite edges:
\begin{eqnarray*}
f(r):=\#\left\{n\geq 1:\#\mathcal{K}(n)=r\right\}.
\end{eqnarray*}

We will show that
\begin{eqnarray}\label{f3 infinity}
f(3)=\infty\ \ \mbox{with probability}\ 1,
\end{eqnarray}
which implies Theorem \ref{mainthm}. The idea for the proof comes from \cite{DS18}.
In the proof, we need  a transience result for the favorite edge process of one-dimensional simple symmetric random walks. To prove this transience result, we establish an invariance principle for one-side local times of random walks.

We mention in passing that, similar to \cite{DS18, To01},  we could have defined
the following quantity related $f(r)$:
$$
\widetilde{f}(r):=\#\left\{n\geq 1: \widetilde{S}_n\in \mathcal{K}(n)
\mbox{ and }\#\mathcal{K}(n)=r\right\},
$$
which is the number of times at which a new favorite
edge appears, tied with $r-1$
other favorite edges. Using the recurrence of
one-dimensional simple symmetric random walk, one can easily see that
$f(3)=\infty$ a.s. is equivalent to $\widetilde{f}(3)=\infty$ a.s.
See for instance, the paragraph below \cite[Theorem 1.1]{TW97}.

The rest of the paper is organized as follows. In Section 2, we
establish an invariance principle for one-side local times of random walks,
and then use it to prove the transience of the favorite edge process.
In Section 3, we set up the framework for our proof for Theorem \ref{mainthm}. In Section 4, we give the proof of Theorem \ref{mainthm}. We emphasize that the idea for the proof of Theorem \ref{mainthm} comes from \cite{DS18}, and
our main contributions are the invariance principle for one-side local times and the transience of the favorite edge process. In final section, we give a remark.

\section{The transience of the favorite edge process}

In this section, we study the transience of
the favorite edge process
and  prove the following:

\begin{thm}\label{thm-2.1}
For any $\gamma>11$,  we have
\begin{eqnarray}\label{thm-2.1-a}
\liminf_{n\to\infty}\frac{\tilde{U}(n)}{\sqrt{n}(\log n)^{-\gamma}}=\infty
\qquad a.s.,
\end{eqnarray}
where  $\tilde{U}(n):=\min\{|x|:x\in\mathcal{K}(n)\}$.
\end{thm}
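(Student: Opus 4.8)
The plan is to prove the equivalent assertion that for every $A>0$,
\[
P\big(\tilde U(n)\le A\sqrt{n}(\log n)^{-\gamma}\ \text{for infinitely many }n\big)=0,
\]
and to extract this from a Borel--Cantelli argument carried out along the dyadic times $n_k=2^k$. Writing $\epsilon_k:=A\sqrt{n_k}(\log n_k)^{-\gamma}$, the event $\{\tilde U(n)\le\epsilon_k\}$ asserts that some edge $x$ with $|x|\le\epsilon_k$ is favorite, i.e. $L(x,n)=\sup_y L(y,n)$. By \eqref{UDcrossings} this is, up to the bounded indicator corrections there, the statement that the one-sided (upcrossing) local time $\xi_U(x,n)$ attains its spatial maximum near the origin. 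First I would therefore use the invariance principle for one-side local times (which we establish below for exactly this purpose) to pass from the scaled field $\{\xi_U(\lfloor x\sqrt{n}\rfloor,n)/\sqrt{n}\}_x$ to the local-time field $\{L^B(x,1)\}_x$ of a standard Wiener process $(B_t)$ run for unit time. The whole problem then reduces to controlling the location $\rho$ of the maximum of the Brownian local-time field.

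The core is a quantitative estimate of the form $P(|\rho|\le\delta)\le\phi(\delta)$ with $\phi$ small, which I would prove through the Ray--Knight description of the local-time field. Conditionally on $L^B(0,\cdot)$, and up to the boundary effect of working at a fixed time, the field $x\mapsto L^B(x,\cdot)$ on each side of the origin is governed by a squared Bessel process of dimension $0$, hence is a non-negative martingale in the spatial variable started from the value at $0$. For such a process $Z$ with $Z_0=z_0$ absorbed at $0$, optional stopping gives the gambler's-ruin identity $P(\sup_x Z_x\ge(1+\eta)z_0)=1/(1+\eta)$, so with probability close to $1$ the field strictly overshoots its value at the origin before decaying; this is precisely the mechanism forcing the favorite point away from $0$. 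Quantifying the overshoot on the spatial scale $\delta$ (where the local fluctuation of the field is of order $\sqrt{z_0\delta}$) and combining the two sides yields a bound of the form $P(|\rho|\le\delta)\lesssim(\log(1/\delta))^{c}\,\delta^{a}$ for explicit $a,c>0$; with $\delta=(\log n_k)^{-\gamma}$ this gives a per-scale probability decaying like $k^{-a\gamma}$ up to polylogarithmic factors.

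The delicate point, which I expect to be the main obstacle, is to upgrade this fixed-time estimate to a bound uniform over a whole dyadic block $n\in[n_k,n_{k+1}]$: it does not suffice that at each fixed time the favorite edge is far from $0$, since $\tilde U(n)$ is not monotone and a favorite edge can transiently drift back toward the origin. After rescaling the block by $n_{k+1}$ this becomes the event that the favorite-point process $t\mapsto\rho(t)$ of the Brownian local-time field enters the $\delta$-neighborhood of the origin for some $t\in[1/2,1]$, and it must be controlled through the joint (Skorokhod) coupling of the walk's local-time field to $L^B$ supplied by the invariance principle, together with an excursion/continuity estimate for $\rho(\cdot)$. It is here that one pays additional powers of $\log n_k$, coming from the union bound over the $\asymp\sqrt{n_k}$ near-origin edges and over the time-discretization inside the block.

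Finally I would assemble the pieces: the per-scale probabilities behave like $k^{-a\gamma}$ times a polylogarithmic loss $k^{b}$, so Borel--Cantelli along $(n_k)$ applies as soon as $a\gamma-b>1$, i.e. $\gamma>(1+b)/a$, which forces $\tilde U(n)>A\sqrt n(\log n)^{-\gamma}$ eventually for every $A$ and hence \eqref{thm-2.1-a}. The precise threshold $\gamma>11$ is inessential to the mechanism; it emerges only from optimizing the Bessel overshoot exponent $a$ against the accumulated logarithmic losses $b$ in the block union bound, and chasing these constants is the routine (if careful) bookkeeping of the argument.
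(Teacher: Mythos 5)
Your overall skeleton---reduce favorite edges to one-sided local times via \eqref{UDcrossings}, then transfer to the Brownian local-time field by a one-sided invariance principle---matches the paper's (Proposition \ref{pro-2.4}, Theorem \ref{One-side Invariance principle}, Proposition \ref{pro-2.5}). The divergence, and the genuine gap, lies in how the Brownian favorite point is kept away from the origin \emph{uniformly in time}. The paper proves no such estimate: it imports it wholesale from Bass and Griffin \cite{BG85}, namely Lemma \ref{inf-edge} (equation (5.1) of \cite{BG85}), which says that a.s.\ for all large $t$ and any $\alpha>5$,
$\eta_R^*(t)>I_R\big(\sqrt{t}(\log t)^{-(2\alpha+1+\varepsilon)},t\big)+\frac{1}{2}t^{1/2-\varepsilon}$.
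Two features of this statement do all the work: it is an almost-sure statement valid for all large $t$ simultaneously (so no Borel--Cantelli over dyadic blocks is needed at all), and the gap $\frac{1}{2}t^{1/2-\varepsilon}$ is \emph{polynomial}, hence dominates the $o(n^{1/4+\varepsilon})$ coupling error of Theorem \ref{One-side Invariance principle}. The threshold $11$ is nothing but $2\alpha+1$ with $\alpha>5$; it originates entirely inside \cite{BG85} and is not produced by any bookkeeping in this paper.

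What you propose instead amounts to reproving the Bass--Griffin estimate, and the step you explicitly defer---upgrading a fixed-time bound $P(|\rho|\le\delta)\lesssim\delta^{a}(\log(1/\delta))^{c}$ to a bound uniform over a dyadic block---is not ``routine (if careful) bookkeeping''; it is the entire difficulty, and your outline contains no mechanism for it. Concretely: (i) the favorite-point process $\rho(t)$ is a pure jump process that genuinely returns toward the origin; Bass and Griffin also show that for $\gamma<1$ the liminf in \eqref{thm-2.1-a} (for Brownian motion) is $0$ a.s., so any correct per-block accounting must fail to be summable once $\gamma<1$. This means the losses you dismiss as polylogarithmic are precisely what decides whether the theorem is true, and no soft ``excursion/continuity estimate for $\rho(\cdot)$'' can exist. (ii) The obstruction is concrete: within a block $[n_k,2n_k]$ the walk can return to the origin and accrue local time there of order $\sqrt{n_k}$, i.e.\ of the \emph{same} order as the global maximum, so a gap of size $\delta^{a}\sqrt{n_k}$ certified at time $n_k$ does not persist through the block by monotonicity, continuity, or coupling accuracy; one must show that while the near-origin local time grows, the competing maximum just outside the window grows along with it, and quantifying that competition is exactly what costs \cite{BG85} their many powers of $\log$. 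There is also a smaller flaw: at a deterministic time the two halves of the local-time field are not squared-Bessel-$0$ martingales (Ray--Knight descriptions hold at hitting or inverse local times; at fixed $t$ the side containing $W(t)$ behaves like a squared Bessel process of dimension $2$ up to $W(t)$), so even your fixed-time gambler's-ruin estimate needs repair. As written, the proposal becomes a proof only by either citing \cite{BG85} as the paper does, or by genuinely redoing that paper's analysis.
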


To prove the above theorem, we will establish an invariance principle
for one-side local times of random walks in Section 2.2. The proof of Theorem \ref{thm-2.1} will be given in Section 2.3. In Section 2.1, we give a  brief introduction for invariance principle for (two-side) local times.

\subsection{Invariance principle for local times}

Recall that the (site) local time process of the random walk $(S_n,n\geq 0)$ is defined by
\begin{eqnarray*}
\xi(x,n):=\#\left\{k:1\leq k\leq n,S_k=x\right\},\ x\in\mathbb{Z},\ n\geq 1.
\end{eqnarray*}
Define
$
\xi^*(n):=\sup_{x\in\mathbb{Z}}\xi(x,n).
$

Let $(W(t))_{t\geq 0}$ be a one-dimensional standard  Brownian motion (Wiener process).
Recall that the local time process $(\eta(x,t))_{t\ge 0, x\in \mathbb{R}}$ of $(W(t))_{t\geq 0}$ is defined by
\begin{eqnarray}\label{equ-2.2-0}
\eta(x,t):=\lim_{\varepsilon\downarrow  0}\frac{1}{2\varepsilon}\int_0^t\textbf{1}_{(x-\varepsilon,x+\varepsilon)}(W(s))ds=
\lim_{\varepsilon\downarrow  0}\frac{1}{\varepsilon}\int_0^t\textbf{1}_{[x,x+\varepsilon)}(W(s))ds.
\end{eqnarray}

R\'{e}v\'{e}sz \cite{Re81} established
the following strong invariance principle with a rate of convergence: On a rich enough probability space, as $n\to\infty$,
\begin{eqnarray}\label{equ-2.2}
\sup_{x\in \mathbb{Z}}|\xi(x,n)-\eta(x,n)|=o(n^{\frac{1}{4}+\varepsilon})\quad a.s.
\end{eqnarray}
for any $\varepsilon>0$.    Cs\"{o}rg\H{o} and Horv\'{a}th
\cite[Theorems 1 and 2]{CH89}
showed that the R\'{e}v\'{e}sz's result  is the best possible.

\subsection{Invariance principle for one-side local times}

Define the one-side local time of the Wiener process by
$$
\eta_R(x,t):=\lim_{\varepsilon\downarrow 0}\frac{1}{2\varepsilon}\int_0^t\textbf{1}_{[x,x+\varepsilon)}\big(W(s)\big)ds.
$$
Then by (\ref{equ-2.2-0}), we get that
 for any $x\in \mathbb{R}$ and $t\ge 0$,
\begin{eqnarray}\label{2.2-a}
\eta_R(x,t)=\frac{1}{2}\eta(x,t).
\end{eqnarray}

The goal in this subsection is to extend (\ref{equ-2.2}) to the one-side case, that is,  prove the following result.

\begin{thm}\label{One-side Invariance principle}
On a rich enough probability space $(\Omega,\mathcal{F},P)$, one can define a Wiener process $(W(t))_{t\geq 0}$ and one-dimensional simple symmetric random walk $(S_k)_{k\in \mathbb{N}}$ with $S_0=0$, such that for any $\varepsilon>0$, as $n\to\infty$, we have
\begin{eqnarray*}
\sup_{x\in \mathbb{Z}}|\xi_D(x,n)-\eta_R(x,n)|=o(n^{\frac{1}{4}+\varepsilon})\ ~~~~a.s.
\end{eqnarray*}
\end{thm}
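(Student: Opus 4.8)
The plan is to deduce the one-sided approximation from R\'ev\'esz's two-sided result \eqref{equ-2.2} together with the identity \eqref{2.2-a}, reducing everything to a uniform control of the spatial increments of the down-crossing field. Since $\eta_R(x,n)=\tfrac12\eta(x,n)$ by \eqref{2.2-a} and $\sup_x|\xi(x,n)-\eta(x,n)|=o(n^{1/4+\varepsilon})$ by \eqref{equ-2.2}, it suffices to prove $\sup_{x}\big|\xi_D(x,n)-\tfrac12\xi(x,n)\big|=o(n^{1/4+\varepsilon})$ a.s. To this end I would first record the exact combinatorial identities. Writing $a_x:=\mathbf{1}_{\{0<x\le S_n\}}-\mathbf{1}_{\{S_n<x\le0\}}$, so that $|a_x|\le 1$, relation \eqref{1.1} gives $\xi_U(x,n)=\xi_D(x-1,n)+a_x$, while sorting the visits to $x$ by the side of arrival gives $\xi(x,n)=\xi_U(x,n)+\xi_D(x,n)$. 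Combining these yields the key recursion $\xi_D(x-1,n)+\xi_D(x,n)=\xi(x,n)-a_x$, and hence
$$
2\xi_D(x,n)-\xi(x,n)=\big(\xi_D(x,n)-\xi_D(x-1,n)\big)-a_x .
$$
Thus the entire problem reduces to the estimate $\sup_x|\xi_D(x,n)-\xi_D(x-1,n)|=o(n^{1/4+\varepsilon})$ a.s. on the maximal spatial increment of the one-sided local-time field.

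Next I would localize. By the law of the iterated logarithm the range satisfies $\max_{k\le n}|S_k|\le C\sqrt{n\log\log n}$ eventually a.s., so $\xi_D(x,n)=0$ outside an interval of length $O(\sqrt{n\log\log n})$ and the supremum over $x$ collapses to one over only polynomially many sites, making a union bound affordable. It is important to note why the naive idea fails: solving the recursion downward from the right edge of the range (where $\xi_D\equiv 0$) expresses $\xi_D(x,n)=\sum_{j\ge1}(-1)^{j-1}\xi(x+j,n)+O(1)$ as an alternating sum of $\Theta(\sqrt n)$ two-sided local times, so applying \eqref{equ-2.2} term by term and summing the individual $o(n^{1/4+\varepsilon})$ errors destroys the cancellation and loses the rate. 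The increment must therefore be controlled directly at the level of crossing counts.

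This is the step I expect to be the main obstacle, and I see two viable routes. The first is to work inside the Skorokhod/KMT embedding behind \eqref{equ-2.2}: realize $S_k=W(\tau_k)$ for the successive one-step exit times $\tau_k$, and prove a strong approximation for the number of down-crossings of a level by $W$ in terms of its one-sided local time $\eta_R$, so that $\xi_D(x,n)$ inherits the $n^{1/4+\varepsilon}$ rate directly rather than through $\xi$. The second is to invoke the Ray--Knight description: at an inverse local time the family $(\xi_D(x,\cdot))_x$ is a critical branching (squared-Bessel-type) Markov chain in $x$, whose one-step increments are of order $\sqrt{\xi_D}=O(n^{1/4})$; a maximal inequality over the $O(\sqrt{n\log\log n})$ relevant sites then gives $\sup_x|\xi_D(x,n)-\xi_D(x-1,n)|=O(n^{1/4}\sqrt{\log n})$, after transferring from the inverse local time back to the fixed horizon $n$ with a controllable error. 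Either way, the genuine difficulty is establishing the $n^{1/4+\varepsilon}$ modulus for the one-sided field with the cancellation intact; once that increment estimate is available, the displayed reduction together with \eqref{equ-2.2} and \eqref{2.2-a} completes the proof.
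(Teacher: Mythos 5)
Your reduction is correct, and it is genuinely more economical than the paper's own proof. The paper does not invoke R\'ev\'esz's two-sided approximation \eqref{equ-2.2} as a black box: it re-runs the Cs\"org\H{o}--Horv\'ath construction \cite{CH89} inside the Skorokhod embedding $S_k=W(\tau_k)$, approximating $\tfrac12\xi(x,n)$ by the partial sums $\sum_{i=1}^{\xi(x,n)}b_i(x)$ of halved excursion local times and then exchanging $\tau_n$ for $n$ via the LIL --- in effect reassembling the two-sided theorem in one-sided form from the building blocks (2.7), (2.8), (2.11) of \cite{CH89}, each halved via \eqref{2.2-a}. Your observation that this can be bypassed is right: since $\eta_R=\tfrac12\eta$ by \eqref{2.2-a}, and since the remaining estimate $\sup_x|\xi_D(x,n)-\tfrac12\xi(x,n)|=o(n^{\frac14+\varepsilon})$ is a statement about the walk alone (hence holds for the walk appearing in R\'ev\'esz's coupling), the triangle inequality on R\'ev\'esz's probability space finishes the proof. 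Moreover your pivot identity $2\xi_D(x,n)-\xi(x,n)=\big(\xi_D(x,n)-\xi_D(x-1,n)\big)-a_x$, with $a_x=\mathbf{1}_{\{0<x\le S_n\}}-\mathbf{1}_{\{S_n<x\le 0\}}$, is exactly the ``fact'' displayed at the end of the paper's proof, so both arguments funnel into one and the same estimate: $\sup_{x\in\mathbb{Z}}|\xi_D(x+1,n)-\xi_D(x,n)|=o(n^{\frac14+\varepsilon})$ a.s., which is the paper's Lemma \ref{One-side local time difference}. Your route buys brevity and modularity; the paper's buys explicit control of all intermediate quantities inside one named coupling, but nothing in the final statement requires that extra information.

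The one genuine gap is that this increment lemma --- which you correctly single out as the main obstacle --- is left as a sketch, and it is where the real technical content of the theorem lies. Your Route 1 is not viable as stated: a Brownian path down-crosses any level it touches infinitely often, so ``the number of down-crossings of a level by $W$'' must in any case be replaced by a quantity of the embedded walk, which returns you to the original problem. Your Route 2 is the right idea and is essentially how the paper proves Lemma \ref{One-side local time difference} (following Cs\'aki--R\'ev\'esz \cite{CR83} and Kesten--Spitzer \cite{KS79}): the number of downcrossings of $1$ between consecutive downcrossings of $0$ is geometric with mean one, so $T_k=\xi_D(1,\alpha_k)-k$ as in \eqref{One-side local time lemma01-a} is a centered i.i.d.\ sum with all moments finite, whence $E|T_k|^m\le C k^{m/2}$ by \cite{Ch51}. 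But three points your sketch waves at need real arguments: (i) the passage from stopping-time scales to the fixed horizon $n$ is handled not by a ``transfer error'' but by the monotone sandwich $T_{D_n}\le \xi_D(1,n)-\xi_D(0,n)\le T_{D_n+1}+1$ with $D_n=\xi_D(0,n)$, combined with Doob's $L^p$ maximal inequality over $k\le n^{\frac12+\delta_1}$ and Kesten--Spitzer moment bounds on $D_n$; (ii) uniformity of the moment bound \eqref{One-side local time lemma01} in $x$ requires a stochastic-domination argument together with the left-sided analogue --- downcrossings are directional, so the two sides of a reference site are not symmetric and the paper must verify $p(-1,0)=p(0,-1)=\tfrac12$ separately; (iii) the union bound needs a quantitative tail estimate, obtained by taking $m=(2+\delta)/\varepsilon$ moments so that the exceptional probability is $O\big(n^{-3/2}(\log n)^{1/2}\big)$ and Borel--Cantelli applies over $|x|\le (n\log n)^{1/2}$. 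Until these are supplied, your proposal is a correct (and nicer) reduction plus a plan, not a complete proof.
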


Before giving the proof of Theorem \ref{One-side Invariance principle}, we prove the following lemma.

\begin{lem}\label{One-side local time difference}
For any $\varepsilon>0$, we have
\begin{eqnarray}\label{One-side local time lemma00}
\lim_{n\to\infty}\frac{\sup\limits_{x\in\mathbb{Z}}|\xi_D(x+1,n)-\xi_D(x,n)|}
{n^{\frac{1}{4}+\varepsilon}}=0\quad a.s.
\end{eqnarray}
\end{lem}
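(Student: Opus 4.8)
The plan is to reduce the claim to a one-step spatial increment of a local time, and then to control that increment by two ingredients: the two-sided invariance principle \eqref{equ-2.2} together with the spatial modulus of continuity of Brownian local time, and a direct excursion/branching estimate that isolates the genuinely one-sided part of the fluctuation. First I would pass from down-crossings to edge local times. Replacing $x$ by $x+1$ in \eqref{UDcrossings} gives $L(x+1,n)=2\xi_D(x,n)+\mathbf{1}_{\{0<x+1\le S_n\}}-\mathbf{1}_{\{S_n<x+1\le 0\}}$, whence
\[
\xi_D(x+1,n)-\xi_D(x,n)=\tfrac12\bigl(L(x+2,n)-L(x+1,n)\bigr)+O(1)
\]
uniformly in $x$, the $O(1)$ coming from the difference of two indicator corrections, bounded by $2$. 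Thus it suffices to prove $\sup_{x}|L(x+1,n)-L(x,n)|=o(n^{1/4+\varepsilon})$ a.s. I record for later the companion identity $\xi_D(x,n)-\xi_D(x-1,n)=\xi(x,n)-L(x,n)=\xi_D(x,n)-\xi_U(x,n)+O(1)$, which makes transparent that the quantity to be bounded is exactly the excess of down-crossings over up-crossings at a single site.

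Next I would dispose of the ``two-sided'' part of the increment. From $\xi(x,n)=\xi_U(x,n)+\xi_D(x,n)$ and \eqref{1.1} one gets $\xi(x,n)=\xi_D(x-1,n)+\xi_D(x,n)+O(1)$; subtracting the analogous identity at $x-1$ yields $\xi_D(x+1,n)-\xi_D(x-1,n)=\xi(x+1,n)-\xi(x,n)+O(1)$. The right-hand side is a one-step increment of the site local time, which I would control by writing $\xi(x+1,n)-\xi(x,n)=[\xi(x+1,n)-\eta(x+1,n)]+[\eta(x+1,n)-\eta(x,n)]+[\eta(x,n)-\xi(x,n)]$. The outer brackets are $o(n^{1/4+\varepsilon})$ by \eqref{equ-2.2}; for the middle bracket, Brownian scaling gives $\eta(x+1,n)\stackrel{d}{=}\sqrt n\,\eta(x/\sqrt n+1/\sqrt n,1)$, so $\sup_x|\eta(x+1,n)-\eta(x,n)|\le \sqrt n\,\omega_{\eta(\cdot,1)}(1/\sqrt n)$, where $\omega$ denotes the spatial modulus of continuity of $\eta(\cdot,1)$. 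Since $\omega_{\eta(\cdot,1)}(h)=O(\sqrt{h\log(1/h)})$ a.s. (H\"older-$1/2$ regularity of Brownian local time, together with the a.s. bounded support of $\eta(\cdot,1)$), this is $O(n^{1/4}\sqrt{\log n})$; a Borel--Cantelli argument along $n=2^k$, using monotonicity of $t\mapsto\eta(x,t)$ to fill the gaps, upgrades it to an a.s. bound for all $n$. Hence $\sup_x|\xi_D(x+1,n)-\xi_D(x-1,n)|=o(n^{1/4+\varepsilon})$.

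The main obstacle, and the step needing genuinely new input, is that controlling the distance-two increment $\xi_D(x+1,n)-\xi_D(x-1,n)$ does \emph{not} control the distance-one increment $\xi_D(x+1,n)-\xi_D(x,n)$: their difference is the alternating, one-sided component $2\xi_D(x,n)-\xi(x,n)=\xi_D(x,n)-\xi_U(x,n)$, which is invisible to the site local time and cannot be extracted from \eqref{equ-2.2} without already knowing the one-sided invariance principle of Theorem \ref{One-side Invariance principle} (whose proof relies on this lemma). To close this gap I would argue directly on the walk. Fixing $n$, the number of $x$ with $\xi_D(x,n)\ne 0$ is $O(\sqrt n\,(\log n)^{c})$ with high probability, since $\xi_D(x,n)\ne0$ forces the walk to reach $x+1$ and $\max_{k\le n}|S_k|$ is of that order. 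For each such $x$ I would bound the conditional law of $\xi_D(x+1,n)-\xi_D(x,n)$ given the crossings of the lower level: by the excursion decomposition (a second Ray--Knight description), each of the $\xi_D(x,n)$ crossings of the edge at $x$ spawns an essentially independent geometric number of crossings of the edge at $x+1$, so the increment is a centered sum of $\xi_D(x,n)\le \sqrt n\,(\log n)^{c}$ mean-zero terms of bounded variance. A Bernstein/Freedman maximal inequality then yields $P\bigl(|\xi_D(x+1,n)-\xi_D(x,n)|>n^{1/4+\varepsilon}\bigr)\le \exp\bigl(-n^{\varepsilon}(\log n)^{-c'}\bigr)$, which survives a union bound over the $O(\sqrt n\,(\log n)^{c})$ relevant $x$ and over $n=2^k$, and a final Borel--Cantelli plus interpolation gives the claim. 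The delicate points I expect to confront are that a fixed time $n$ is not a stopping time for the spatial filtration generated by successive level crossings, so the clean branching representation must be replaced by a martingale built from the excursions straddling time $n$ (the incomplete last excursion contributing only a bounded error), and the uniformity of the exponential bound over all relevant $x$.
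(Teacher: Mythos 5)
Your second half is, in substance, the paper's own proof, and it makes your first half unnecessary. The paper argues exactly as you do there: with $\alpha_i$ the successive downcrossing times at the lower site, the numbers of downcrossings at the upper site between consecutive $\alpha_i$ are i.i.d.\ with law $P(M=0)=\tfrac12$, $P(M=k)=2^{-k-1}$ for $k\ge 1$ (mean $1$, all moments finite); it forms the centered sums $T_k=\sum_{i=1}^{k}\left(\xi_D(1,\alpha_i)-\xi_D(1,\alpha_{i-1})-1\right)$, deals with the fact that $n$ is not a stopping time via the sandwich $|\xi_D(1,n)-\xi_D(0,n)|\le\max_{k\le D_n+1}|T_k|+1$ with $D_n=\xi_D(0,n)$ (your ``straddling excursions'' point), controls $D_n$ by moments of the number of returns to the origin, and finishes with a union bound over $|x|\le(n\log n)^{1/2}$, Borel--Cantelli and the LIL. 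The substantive differences are only in the tools: the paper uses $m$-th moment bounds (Chung plus Doob's $L^p$ maximal inequality) and Markov's inequality with $m=(2+\delta)/\varepsilon$, which already gives $O(n^{-2})$ tails and hence needs no dyadic subsequence, where you invoke Bernstein/Freedman; also note that your cap $\xi_D(x,n)\le\sqrt{n}(\log n)^{c}$ requires a bound on the maximal local time, e.g.\ (\ref{LIL-65}) or moment estimates, not merely the range of the walk. The uniformity-in-$x$ issue you flag is real --- the law of the count before the \emph{first} downcrossing at $x$ depends on $x$ (for $x\le -2$ the walk's first arrival at $x+1$ is itself a downcrossing) --- and the paper resolves it by proving the pairs $(0,1)$ and $(-1,0)$ and then invoking stochastic domination, at the cost of an $O(1)$ correction. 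Given all this, your first half --- the reduction to edge local-time increments and the control of the distance-two increment $\xi_D(x+1,n)-\xi_D(x-1,n)$ through (\ref{equ-2.2}) and the Brownian local-time modulus --- is dead weight: your excursion argument bounds the distance-one increment directly, so there is no remaining gap for it to fill. It also contains a slip: $t\mapsto\eta(x+1,t)-\eta(x,t)$ is not monotone, so dyadic interpolation ``using monotonicity'' fails as stated; one would instead need the spatial modulus uniformly over $t\le n$ (cf.\ Lemma \ref{edge-tool01}).
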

\noindent{\bf Proof.} The idea of the proof comes from Cs\'{a}ki and R\'{e}v\'{e}sz \cite[Lemma 5]{CR83}, where the corresponding problem for
(two-side) local times was considered in a more general setting.
First we show that, for all $m\geq 2$ and all $\delta>0$, there exists
a constant $C$
(which may depends on $m$ but not on $n$) such that
\begin{eqnarray}\label{One-side local time lemma01}
E(|\xi_D(1,n)-\xi_D(0,n)|^m)\leq Cn^{\frac{m}{4}+\delta}.
\end{eqnarray}

Define
\begin{eqnarray}\label{One-side local time lemma01-a}
T_k:=\sum_{i=1}^{k}\left(\xi_D(1,\alpha_i)-\xi_D(1,\alpha_{i-1})-1\right),
\end{eqnarray}
where $\alpha_0=0$, $\alpha_i=\min\{j>\alpha_{i-1}:S_j=0,S_{j-1}=1\},\forall i\geq 1$.
We claim that
\begin{eqnarray}\label{One-side local time lemma02}
E(\xi_D(1,\alpha_1))=1,\quad E(\xi_D^m(1,\alpha_1))<+\infty, \forall m\geq 2.
\end{eqnarray}
To prove this claim, we follow Kesten and Spitzer \cite[Lemma 2]{KS79} and define,
for $x,y\in \mathbb{Z}$, $\tau_0(x):=0,\tau_i(x):=\min\{n>\tau_{i-1}(x):S_n=x,S_{n-1}=x+1\},\forall i\geq 1$. Note that $\alpha_i=\tau_i(0)$. For $j\geq 0$,
\begin{eqnarray*}
M_j(x,y):=\sum_{\tau_j(x)<n\leq\tau_{j+1}(x)}\textbf{1}_{\{S_n=y,S_{n-1}=y+1\}}
\end{eqnarray*}
is the number of downcrossings to $y$ between the $j^{th}$ and $(j+1)^{th}$ downcrossings to $x$.
In particular,  $M_0(0,1)=\xi_D(1,\alpha_1)$. By the strong Markov property, we know that the distribution of
$M_j(0,y)$, $j\ge 0$, is independent of $j$, and the distribution of $M_j(x,y)$, $j\ge 1$, is independent of $j$.
Thus we can
define for $j\geq 1$,
\begin{eqnarray*}
p(x,y)&:=&P(M_j(x,y)\neq 0)\\
&=&P(S_n=y,S_{n-1}=y+1~\mbox{for some}\  n\ \mbox{with~} \tau_j(x)<n\leq\tau_{j+1}(x)).
\end{eqnarray*}

We claim that, for any  $x, y\in \mathbb{Z}$ with $|x-y|=1$, it holds that $p(x,y)=p(y,x)=1/2$. To prove this claim,
it suffices to show that $p(0, 1)=p(1, 0)=1/2$.
When $\tau_1(0)<\infty$, there exists $n_0=\min\{k\ge 0: S_k=1\}<\tau_1(0)$. Then $S_{n_0}=1$, $\{S_{n_0+1}=0\}\subset \{M_0(0, 1)=0\}$
and $\{S_{n_0+1}=2\}\subset \{M_0(0, 1)\neq 0\}$. Since $P(\tau_1(0)<\infty)=1$, we have
\begin{align*}
&p(0, 1)=P(M_0(0, 1)\neq 0, \tau_1(0)<\infty)\\
&=P(S_{n_0+1}=0, M_0(0, 1)\neq 0, \tau_1(0)<\infty)+
P(S_{n_0+1}=2, M_0(0, 1)\neq 0, \tau_1(0)<\infty)\\
&=P(S_{n_0+1}=2)=\frac12.
\end{align*}
Similarly, there exists $n_1=\min\{k\ge 0: S_k=1,S_{k-1}=2\}<\tau_1(1)$, then
\begin{align*}
&p(1, 0)=P(S_{n_1+1}=2, M_1(1, 0)\neq 0)+P(S_{n_1+1}=0, M_1(1, 0)\neq 0)\\
&=P(S_{n_1+1}=0)=\frac12.
\end{align*}

Combining the claim above with the strong Markov property and the fact $S_{\tau_j(x)}=S_{\tau_{j+1}(x)}=x$, we get
\begin{equation*}
P(M_j(0,1)=k)=\left\{
\begin{array}{ll}
1-p(0,1)=\frac{1}{2}, &\mbox{if}\ k=0,\\
p(0,1)[1-p(1,0)]^{k-1}p(1,0)=\frac{1}{2^{k+1}},&\text{if}\ k\geq 1.
\end{array}
\right.
\end{equation*}
It follows that
\begin{eqnarray*}
&&E(\xi_D(1,\alpha_1))=E(M_j(0,1))=\sum_{k=1}^{\infty}k\cdot \frac{1}{2^{k+1}}=1,\\
&&E(\xi_D^m(1,\alpha_1))=E(M_j^m(0,1))=\sum_{k=1}^{\infty}k^m\cdot \frac{1}{2^{k+1}}<+\infty,
\end{eqnarray*}
which implies that (\ref{One-side local time lemma02}) holds.

It follows from the strong Markov property
that $T_k$ is a sum of i.i.d. r.v.'s with mean $0$ and
finite moments of all orders.
Therefore by Chung \cite{Ch51} and the $L^p$-maximal inequality for martingales, we have
\begin{eqnarray}\label{One-side local time lemma03}
E(|T_k|^m)\leq C_1k^{\frac{m}{2}}
\end{eqnarray}
and for any $\delta_1>0$,
\begin{eqnarray}\label{One-side local time lemma04}
E\left(\max_{k\leq n^{\frac{1}{2}+\delta_1}}|T_k|^m\right)\leq
C_2n^{\frac{m}{4}+\frac{m\delta_1}{2}}.
\end{eqnarray}
Note that
\begin{eqnarray*}
\xi_D(1,\alpha_{D_n})-\xi_D(0,n)\leq\xi_D(1,n)-\xi_D(0,n)\leq\xi_D(1,\alpha_{D_{n}+1})-
(\xi_D(0,n)+1)+1,
\end{eqnarray*}
where $D_n=\xi_D(0,n)$. Thus
on the event $\{D_n+1\leq n^{\frac{1}{2}+\delta_1}\}$, we have
\begin{eqnarray*}
|\xi_D(1,n)-\xi_D(0,n)|\leq\max_{1\leq k\leq n^{\frac{1}{2}+\delta_1}}|T_k|+1.
\end{eqnarray*}
Hence
\begin{eqnarray}\label{One-side local time lemma05}
E(|\xi_D(1,n)-\xi_D(0,n)|^m)\leq C E\left(\max_{1\leq k\leq n^{\frac{1}{2}+\delta_1}}(|T_k|+1)^m\right)+n^mP\left(D_n+1\geq n^{\frac{1}{2}+\delta_1}\right).
\end{eqnarray}
Let  $N_n=N(0,n)=\#\{k:0<k\leq n,S_k=0\}$, then according to Kesten and Spitzer \cite{KS79}, $E(N_n^m)=O(n^{\frac{m}{2}})$. Combining this with the fact that $0\leq D_n\leq N_n$, we get that $E(D_n^m)=O(n^{\frac{m}{2}})$.
Then by Markov's inequality,  (\ref{One-side local time lemma04}) and (\ref{One-side local time lemma05}),
letting $\delta_1$ be small enough, we obtain (\ref{One-side local time lemma01}).

By repeating the argument above  and noticing that
$p(-1,0)=p(0,-1)=\frac{1}{2}$, we can obtain the inequality (\ref{One-side local time lemma01}) with $\xi_D(1,n)$ replaced by $\xi_D(-1,n)$.

It is easy to see that for any $x\in \mathbb{Z}$,
\begin{eqnarray}\label{One-side local time lemma06}
E(|\xi_D(x+1,n)-\xi_D(x,n)|^m)\leq Cn^{\frac{m}{4}+\delta},
\end{eqnarray}
where the constant $C$ does not depend on $x$, because $\xi_D(x+1,n)-\xi_D(x,n)$ is stochastically smaller than $\max(|\xi_D(1,n)-\xi_D(0,n)|,|\xi_D(-1,n)-\xi_D(0,n)|)$.

Choosing $m=\frac{2+\delta}{\varepsilon}$, we obtain by Markov's inequality that
\begin{eqnarray*}
P\Big(|\xi_D(x+1,n)-\xi_D(x,n)|>n^{\frac{1}{4}+\varepsilon}\Big)\leq\dfrac{C}{n^2}
\end{eqnarray*}
and hence
\begin{eqnarray*}
P\left(\sup_{|x|<(n\log n)^{\frac{1}{2}}}|\xi_D(x+1,n)-\xi_D(x,n)|>n^{\frac{1}{4}+\varepsilon}\right)\leq\dfrac{2C(\log n)^{\frac{1}{2}}}{n^{\frac{3}{2}}}.
\end{eqnarray*}
Therefore, by the Borel-Cantelli Lemma, we get that
\begin{eqnarray*}
\lim_{n\to+\infty}\dfrac{\sup_{|x|\leq(n\log n)^{\frac{1}{2}}}|\xi_D(x+1,n)-\xi_D(x,n)|}{n^{\frac{1}{4}+\varepsilon}}=0~~~~a.s.
\end{eqnarray*}
The law of the iterated logarithm for $S_n$ implies that $\xi_D(x,n)=0~~a.s.$ for $|x|\geq (n\log n)^{\frac{1}{2}}$ and $n$ sufficiently large, hence (\ref{One-side local time lemma00}) holds.\hfill\fbox

\bigskip

\noindent {\bf Proof of Theorem \ref{One-side Invariance principle}.}
This proof is inspired by R\'{e}v\'{e}sz \cite{Re81}, and Cs\"{o}rg\H{o} and Horv\'{a}th \cite{CH89}.
Let $(W(t))_{t\geq 0}$ be a Wiener process and define $\tau_0:=0,\tau_1:=\inf\{t:t>0,|W(t)|=1\},\tau_n:=\inf\{t:t>\tau_{n-1},|W(t)-W(\tau_{n-1})|=1\}, \forall n\geq 2$. Then $X_i=W(\tau_i)-W(\tau_{i-1})(i\geq 1)$ are i.i.d. r.v.'s with $P\{X_i=1\}=P\{X_i=-1\}=1/2$, and  $\tau_i-\tau_{i-1}(i\geq 1)$ are i.i.d. r.v.'s with $E(\tau_i-\tau_{i-1})=1$ and $E(\tau_i-\tau_{i-1})^2<\infty$. Put  $\sigma^2=E(\tau_1-1)^2$.
Define $S_k=X_1+\cdots+X_k=W(\tau_k)$.

Let $a_i(x)=\eta(x,\tau_{v(i)+1})-\eta(x,\tau_{v(i)-1}),b_i(x)=\eta_R(x,\tau_{v(i)+1})-\eta_R(x,\tau_{v(i)-1})(i\in\mathbb{Z}^+)$, where
$v(1)=\min\{k\geq 0,W(\tau_k)(=S_k)=x\}, v(n)=\min\{k>v(n-1),W(\tau_k)(=S_k)=x\},\forall n\geq 2$.
Then by (\ref{2.2-a}), we have
\begin{eqnarray}\label{proof-thm-2.2-a}
b_i(x)=\frac{a_i(x)}{2}.
\end{eqnarray}

Kesten \cite{Ke65} showed
\begin{eqnarray}\label{LIL-65}
\limsup_{n\to\infty}(2n\log\log n)^{-\frac{1}{2}}\cdot\sup_{x\in\mathbb{Z}}\xi(x,n)=1~~~~~a.s.
\end{eqnarray}
Cs\"{o}rg\H{o} and Horv\'{a}th \cite[(2.7)]{CH89}  says that
\begin{eqnarray*}
\max_{-n\leq x\leq n}\left|\sum_{i=1}^{\xi(x,n)}a_i(x)-\eta(x,\tau_n)\right|=O(\log n)\quad a.s.,
\end{eqnarray*}
which together with (\ref{2.2-a}) and (\ref{proof-thm-2.2-a}) implies that
\begin{eqnarray}\label{One-side IP Proof 01}
\max_{-n\leq x\leq n}\left|\sum_{i=1}^{\xi(x,n)}b_i(x)-\eta_R(x,\tau_n)\right|=O(\log n)~~~~~a.s.
\end{eqnarray}
By \cite[(2.8)]{CH89} and (\ref{proof-thm-2.2-a}), we have
\begin{eqnarray}\label{tool01}
P\left\{\max_{-k\leq x\leq k}\left|\sum_{i=1}^{k}\left(b_i(x)-\frac{1}{2}\right)\right|>C_1(k\log k)^{\frac{1}{2}}\right\}\leq C_2k^{-2}.
\end{eqnarray}
Combining this with the Borel-Cantelli Lemma and then using  (\ref{LIL-65}), we get that
\begin{eqnarray}\label{One-side IP Proof 02}
\limsup_{n\to\infty}\frac{\max\limits_{-n\leq x\leq n}|\sum_{i=1}^{\xi(x,n)}b_i(x)-\frac{\xi(x,n)}{2}|}{n^{\frac{1}{4}}\cdot(\log n)^{\frac{1}{2}}\cdot(\log\log n)^{\frac{1}{4}}}\leq C_3~~~~~a.s.
\end{eqnarray}
Since
$\tau_i-\tau_{i-1},i\geq 1,$ are i.i.d. r.v.'s with $E\tau_1=1$ and $\sigma^2=E(\tau_1-1)^2<\infty$, by the law of iterated logarithm, we have
\begin{eqnarray}\label{LIL-8983}
\limsup_{n\to\infty}\frac{|\tau_n-n|}{\sqrt{2\sigma^2n\log \log n}}=1~~~~~a.s.
\end{eqnarray}
By \cite[(2.11)]{CH89} and (\ref{2.2-a}), we have
\begin{eqnarray*}
\limsup_{n\to\infty}\dfrac{\sup\limits_x|\eta_R(x,n\pm g(n))-\eta_R(x,n)|}{n^{\frac{1}{4}}\cdot(\log n)^{\frac{1}{2}}\cdot(\log\log n)^{\frac{1}{4}}}\leq C_4\quad a.s.,
\end{eqnarray*}
where $g(n)=(4\sigma^2n\log\log n)^{\frac{1}{2}}$.
(Note that there is a minor typo in the line below \cite[(2.11)]{CH89}, the $-\frac12$ there should be $\frac12$.) Thus
by (\ref{LIL-8983}) we have
\begin{eqnarray}\label{One-side IP Proof 03}
\limsup_{n\to\infty}\dfrac{\sup\limits_x|\eta_R(x,\tau_n)-\eta_R(x,n)|}{n^{\frac{1}{4}}\cdot(\log n)^{\frac{1}{2}}\cdot(\log\log n)^{\frac{1}{4}}}\leq C_5~~~~~a.s.
\end{eqnarray}
Note that $\xi_D(x,n)=0$ if $|x|>n$ and that $\lim_{n\to\infty}\sup_{|x|>n}\eta_R(x,n)=0\ a.s.$ Thus
\begin{eqnarray}\label{One-side IP Proof 04}
\lim_{n\to\infty}\sup_{|x|>n}\dfrac{\xi_D(x,n)-\eta_R(x,n)}{n^{\frac{1}{4}+\varepsilon}}=0~~~~~a.s.
\end{eqnarray}
Now by (\ref{One-side IP Proof 01}), (\ref{One-side IP Proof 02}), (\ref{One-side IP Proof 03}), (\ref{One-side IP Proof 04}) and Lemma \ref{One-side local time difference}, we obtain
\begin{eqnarray*}
&&\limsup_{n\to\infty}\dfrac{\sup\limits_x|\xi_D(x,n)-\eta_R(x,n)|}{n^{\frac{1}{4}+\varepsilon}}\\
&&\leq\limsup_{n\to\infty}\dfrac{\sup\limits_{x\leq n}|\xi_D(x,n)-\frac{\xi(x,n)}{2}|+\sup\limits_{x\leq n}|\frac{\xi(x,n)}{2}-\sum_{i=1}^{\xi(x,n)}b_i(x)|}{n^{\frac{1}{4}+\varepsilon}}\\
&&\quad+\limsup_{n\to\infty}\dfrac{\sup\limits_{x\leq n}|\sum_{i=1}^{\xi(x,n)}b_i(x)-\eta_R(x,\tau_n)|+\sup\limits_{x\leq n}|\eta_R(x,\tau_n)-\eta_R(x,n)|}
{n^{\frac{1}{4}+\varepsilon}}\\
&&\quad+\lim_{n\to\infty}\sup\limits_{|x|>n}\dfrac{|\xi_D(x,n)-\eta_R(x,n)|}{n^{\frac{1}{4}+\varepsilon}}\\
&&\leq\limsup_{n\to\infty}\dfrac{\sup\limits_{x\leq n}\frac{|\xi_D(x,n)-\xi_D(x-1,n)|+1}{2}+\sup\limits_{x\leq n}|\frac{\xi(x,n)}{2}-\sum_{i=1}^{\xi(x,n)}b_i(x)|}{n^{\frac{1}{4}+\varepsilon}}\\
&&\quad+\limsup_{n\to\infty}\dfrac{\sup\limits_{x\leq n}|\sum_{i=1}^{\xi(x,n)}b_i(x)-\eta_R(x,\tau_n)|+\sup\limits_{x\leq n}|\eta_R(x,\tau_n)-\eta_R(x,n)|}
{n^{\frac{1}{4}+\varepsilon}}\\
&&\quad+\lim_{n\to\infty}\sup\limits_{|x|>n}\dfrac{|\xi_D(x,n)-\eta_R(x,n)|}{n^{\frac{1}{4}+\varepsilon}}\\
&&=0\ a.s.,
\end{eqnarray*}
where we used the following fact
$$
\left|\xi_D(x,n)-\frac{\xi(x,n)}{2}\right|=\frac{|\xi_D(x,n)-\xi_U(x,n)|}{2}
\leq \frac{|\xi_D(x,n)-\xi_D(x-1,n)|+1}{2}.
$$
 The proof is complete.\hfill\fbox

\subsection{Proof of Theorem \ref{thm-2.1}}

Set
\begin{eqnarray*}
&&\eta_R^*(n):=\sup_{x\in\mathbb{R}}\eta_R(x,n),\quad \eta^*(n):=\sup_{x\in\mathbb{R}}\eta(x,n),\\
&&\xi_D^*(n):=\sup_{x\in\mathbb{Z}}\xi_D(x,n),\quad T_r:=\inf\{n>0:\eta_R(0,n)\geq r\},\\
&&I_R(h,n):=\sup_{|x|\leq h}\eta_R(x,n),
\end{eqnarray*}
and
\begin{eqnarray*}
\mathcal{V}(n):=\{x\in\mathbb{Z}:\eta_R(x,n)=\eta_R^*(n)\},\quad \mathcal{U}(n):=\{x\in\mathbb{Z}:\xi_D(x,n)=\xi_D^*(n)\}.
\end{eqnarray*}

It is easy to see
that Theorem \ref{thm-2.1} is a consequence of the following two propositions.

\begin{pro}\label{pro-2.4}
If $x\in \mathcal{K}(n)$, then $x-1\in \mathcal{U}(n)$.
\end{pro}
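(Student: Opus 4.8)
The plan is to read off the claim directly from the last identity in (\ref{UDcrossings}), which expresses the edge local time as a site downcrossing count plus a bounded correction. Writing $c(x):=\mathbf{1}_{\{0<x\le S_n\}}-\mathbf{1}_{\{S_n<x\le 0\}}$, that identity reads $L(x,n)=2\xi_D(x-1,n)+c(x)$. Thus, after the reindexing $y=x-1$, maximizing $L(\cdot,n)$ over edges is essentially the same as maximizing $\xi_D(\cdot,n)$ over sites, the only discrepancy being the term $c$; the whole argument amounts to controlling this discrepancy against the factor $2$ multiplying $\xi_D$.

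I would argue by contradiction. Suppose $x\in\mathcal{K}(n)$ but $x-1\notin\mathcal{U}(n)$. Then there is a site $y$ with $\xi_D(y,n)\ge\xi_D(x-1,n)+1$, since these quantities are integers. Comparing the edge $y+1$ with the edge $x$ through the identity above gives
\begin{eqnarray*}
L(y+1,n)-L(x,n)&=&2\bigl(\xi_D(y,n)-\xi_D(x-1,n)\bigr)+c(y+1)-c(x)\\
&\ge& 2+c(y+1)-c(x).
\end{eqnarray*}
If I can show that $c(y+1)-c(x)\ge -1$, then $L(y+1,n)>L(x,n)$, contradicting $x\in\mathcal{K}(n)$, and the proof is complete.

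The one point that needs care, and the only genuine obstacle, is the behaviour of $c$: a crude bound gives only $c(y+1)-c(x)\ge -2$, which would exactly cancel the gain of $2$ and leave the argument inconclusive. The key observation is that the two indicators defining $c$ are supported on disjoint sets whose presence is dictated by the sign of $S_n$. Indeed, if $S_n>0$ the second indicator vanishes identically and $c$ takes values in $\{0,1\}$, while if $S_n\le 0$ the first indicator vanishes identically and $c$ takes values in $\{-1,0\}$; in either case $c$ never attains both the value $+1$ and the value $-1$. Consequently $c(y+1)-c(x)\ge -1$ in all cases, which is precisely what is needed to close the contradiction.
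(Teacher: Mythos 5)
Your proof is correct and takes essentially the same route as the paper: both argue by contradiction from the identity (\ref{UDcrossings}), comparing $L(y+1,n)$ for a site $y$ with strictly larger downcrossing count against $L(x,n)$, and both hinge on the same key point that the sign of $S_n$ prevents the correction indicators from attaining the values $+1$ and $-1$ simultaneously. The only difference is presentational: you package this as the uniform bound $c(y+1)-c(x)\ge -1$, whereas the paper runs a case analysis on $L(x,n)\in\{2h-1,2h,2h+1\}$ and extracts the sign information ($0<x\le S_n$) only in the critical case $L(x,n)=2h+1$.
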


\begin{pro}\label{pro-2.5}
For any $\gamma>11$,  we have
\begin{eqnarray}\label{pro-2.5-a}
\liminf_{n\to\infty}\frac{U(n)}{\sqrt{n}(\log n)^{-\gamma}}=\infty
\qquad a.s.,
\end{eqnarray}
where $U(n):=\min\{|x|:x\in\mathcal{U}(n)\}$.
\end{pro}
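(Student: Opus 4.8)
The plan is to transfer the statement from the discrete down-crossing local times $\xi_D$ to the one-side Brownian local time $\eta_R$ by means of the invariance principle of Theorem \ref{One-side Invariance principle}, and then to invoke the transience of the favorite point of Brownian motion due to Bass and Griffin \cite{BG85}, which is the source of the threshold $\gamma>11$. Throughout write $E_n:=\sup_{x\in\mathbb{Z}}|\xi_D(x,n)-\eta_R(x,n)|$, so that $E_n=o(n^{1/4+\varepsilon})$ a.s. for every $\varepsilon>0$, and let $\omega_n$ denote the maximal oscillation of $\eta_R(\cdot,n)$ over spatial intervals of length $1$, which by the spatial Hölder continuity of Brownian local time and scaling satisfies $\omega_n=o(n^{1/4+\varepsilon})$ a.s. It suffices to show that for each fixed $A>0$ we have, a.s. for all large $n$, that no integer $x$ with $|x|<h_n:=A\sqrt{n}(\log n)^{-\gamma}$ lies in $\mathcal{U}(n)$; letting $A\to\infty$ then yields \eqref{pro-2.5-a}.

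First I would record the elementary two-sided comparison of the two local times at integer sites. If some $x$ with $|x|<h_n$ satisfies $\xi_D(x,n)=\xi_D^*(n)$, then on the one hand $\xi_D^*(n)=\xi_D(x,n)\le \eta_R(x,n)+E_n\le I_R(h_n,n)+E_n$, while on the other hand, choosing the integer nearest to a maximizer of $\eta_R(\cdot,n)$, we get $\xi_D^*(n)\ge \max_{x\in\mathbb{Z}}\eta_R(x,n)-E_n\ge \eta_R^*(n)-\omega_n-E_n$. Combining these, the existence of such an $x$ forces the Brownian gap to be small:
\[
\eta_R^*(n)-I_R(h_n,n)\le 2E_n+\omega_n=o(n^{1/4+\varepsilon})\qquad a.s.
\]
Thus the proposition reduces to the purely Brownian statement that, for $\gamma>11$, a.s. eventually $\eta_R^*(n)-I_R(h_n,n)\gg n^{1/4+\varepsilon}$ for some $\varepsilon>0$ small enough.

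The heart of the argument is this gap estimate, and here I would lean on \cite{BG85}. By Brownian scaling, $\eta_R^*(n)-I_R(h_n,n)\overset{d}{=}\sqrt{n}\big(\eta_R^*(1)-\sup_{|y|\le \delta_n}\eta_R(y,1)\big)$ with $\delta_n=A(\log n)^{-\gamma}$, so the bad event $\{\eta_R^*(n)-I_R(h_n,n)\le Cn^{1/4+\varepsilon}\}$ says that some point within $\delta_n$ of the origin carries local time within $Cn^{-1/4+\varepsilon}$ of the global maximum at time $1$; this is governed by the probability that the favorite point of Brownian motion comes within $\delta_n$ of the origin. Bass and Griffin's quantitative estimates for this event are only poly-logarithmically small in $n$ (since $\delta_n$ is only poly-logarithmically small), hence not summable directly; the standard remedy, as in \cite{BG85}, is to run Borel--Cantelli along a rapidly growing subsequence $n_k$ and then to fill in the intermediate times using the time-modulus of continuity of the local time, precisely the estimate $\sup_x|\eta_R(x,n\pm g(n))-\eta_R(x,n)|=O(n^{1/4}(\log n)^{1/2}(\log\log n)^{1/4})$ already exploited in the proof of Theorem \ref{One-side Invariance principle}. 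Balancing the subsequence spacing against the poly-logarithmic decay and the fill-in error is exactly what produces the constraint $\gamma>11$.

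I expect the main obstacle to be this last, uniform-in-$n$ Brownian gap estimate rather than the transfer step: one must guarantee the lower bound simultaneously for all large $n$ (not merely along a subsequence), and the local-time profile is nearly flat near its maximum, so a point can be a near-maximizer while still sitting at a non-negligible distance from the true favorite point. Controlling both the spatial flatness (via Hölder continuity) and the temporal drift of the favorite point (via the time modulus) with enough room to beat the invariance-principle error $n^{1/4+\varepsilon}$ is the delicate part, and it is where the numerical threshold $11$ enters. The passage from $\eta_R$ back to $\xi_D$, and ultimately from $\mathcal{U}(n)$ to the edge favorites via Proposition \ref{pro-2.4}, is then routine.
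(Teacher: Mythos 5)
Your transfer step is exactly the paper's: your $E_n$ bound is Theorem \ref{One-side Invariance principle}, your $\omega_n$ bound is Lemma \ref{edge-tool01} (quoted from \cite[Lemma 5.3]{BG85}), and your two-sided comparison reproduces Lemma \ref{site-edge favor}. That part is correct. The genuine gap is in what you yourself flag as the heart of the matter: the uniform-in-$n$ lower bound on $\eta_R^*(n)-I_R(h_n,n)$ is never proved, only sketched, and the sketch as stated would not go through. Polylogarithmically small probabilities are summable only along subsequences $n_k$ growing at least geometrically; but over a time gap of length $\sim n_k$ the profile $\eta_R(\cdot,\cdot)$ can change by order $\sqrt{n_k}$, while the temporal modulus $O(n^{1/4}(\log n)^{1/2}(\log\log n)^{1/4})$ you invoke is only valid for time shifts of size $g(n)=O(\sqrt{n\log\log n})$. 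A subsequence with spacing $g(n_k)$ has $\approx\sqrt{N}$ points up to time $N$, along which probabilities of size $(\log n_k)^{-\beta}\approx(\log k)^{-\beta}$ are never summable. So ``Borel--Cantelli along a rapidly growing subsequence plus fill-in by the time modulus'' cannot, as described, deliver the statement you need; the argument of Bass and Griffin that actually produces the exponent $11$ is structured differently (it exploits monotonicity of the competing quantities between checkpoint times and several bespoke estimates), and reproducing it is a substantial piece of work that your proposal does not contain.

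The saving grace is that the estimate you are trying to re-derive exists verbatim in \cite{BG85}: it is \cite[(5.1)]{BG85}, recorded (after halving via $\eta_R=\tfrac12\eta$) as Lemma \ref{inf-edge} in the paper. It states that for any $\alpha>5$ and $\varepsilon>0$, almost surely for all $n\ge n_0$, $\eta_R^*(n)>I_R\bigl(\sqrt{n}(\log n)^{-(2\alpha+1+\varepsilon)},n\bigr)+\tfrac12 n^{\frac12-\varepsilon}$. This is already uniform in $n$ (no further Borel--Cantelli or fill-in is required at the level of this proposition), and the gap it provides is of order $n^{\frac12-\varepsilon}$, vastly larger than the $o(n^{\frac14+\varepsilon})$ transfer error, so the comparison closes with room to spare. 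The threshold $\gamma>11$ is then simply $2\alpha+1$ with $\alpha>5$ inherited from that citation, not the outcome of a balancing computation you would perform. With this single citation replacing your sketched derivation, your argument becomes complete and coincides with the paper's proof of Proposition \ref{pro-2.5}.
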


\subsubsection{Proof of Proposition \ref{pro-2.4}.}

Assume that $x\in \mathcal{K}(n)$, that is, $x$ is a favorite edge at time $n$. We want to prove that $x-1\in \mathcal{U}(n)$. Let $\xi_D(x-1,n)=h$ for some nonnegative integer $h$. Then by (\ref{UDcrossings}), we know that $L(x,n)\in \{2h-1,2h,2h+1\}$.
We will prove $x-1\in \mathcal{U}(n)$ by contradiction. Suppose that $x-1\notin \mathcal{U}(n)$. Then for any $y\in \mathcal{U}(n)$, we have $\xi_D(y,n)\geq h+1$.

{\it Case 1.} $L(x,n)=2h-1$ or $2h$.  By (\ref{UDcrossings}), we get that for any $y\in \mathcal{U}(n)$,
$$L(y+1,n)\geq 2\xi_D(y,n)-1\geq 2(h+1)-1=2h+1.
$$
This implies that $x\notin \mathcal{K}(n)$, which is a contradiction.

{\it Case 2.} $L(x,n)=2h+1$.  Now by (\ref{UDcrossings}), we know that $0<x\leq S_n$.
For any $y\in \mathcal{U}(n)$, we have the following two subcases:

{\it Case 2.1.} $y\leq 0$ or $y>S_n$. Now by (\ref{UDcrossings}), we get that
$$
L(y+1,n)=2\xi_D(y,n)\geq 2(h+1)=2h+2.
$$
This implies that  $x\notin \mathcal{K}(n)$, which is a contradiction.

{\it Case 2.2.} $0<y\leq S_n$. Now by (\ref{UDcrossings}), we get that
$$
L(y+1,n)=2\xi_D(y,n)+1\geq 2(h+1)+1=2h+3.
$$
This implies that  $x\notin \mathcal{K}(n)$, which is a contradiction.

Thus we must have $x-1\in \mathcal{U}(n)$. The proof is complete.\hfill\fbox

\subsubsection{Proof of Proposition \ref{pro-2.5}.}

To prove Proposition \ref{pro-2.5}, we need several lemmas. By \cite[(5.1)]{BG85} and (\ref{2.2-a}), we have

\begin{lem}\label{inf-edge}
For any  $\alpha>5$ and $\varepsilon>0$, there exists $n_0$ such that,
with probability one, we have
\begin{eqnarray*}
\eta_R^*(n)>I_R\left(\frac{\sqrt{n}}{(\log n)^{2\alpha+1+\varepsilon}},n\right)+\frac{1}{2}n^{\frac{1}{2}-\varepsilon},
\quad n\ge n_0.
\end{eqnarray*}
\end{lem}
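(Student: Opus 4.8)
The plan is to obtain the inequality for the one-side local time as an immediate rescaling of the corresponding estimate for the (two-side) Brownian local time established by Bass and Griffin, the only bridge being the elementary identity $\eta_R(x,t)=\frac12\eta(x,t)$ recorded in (\ref{2.2-a}).

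First I would introduce the two-side analogues of the quantities in the statement, namely $\eta^*(n):=\sup_{x}\eta(x,n)$ and $I(h,n):=\sup_{|x|\le h}\eta(x,n)$. Since multiplying by the positive constant $\frac12$ commutes with taking suprema, (\ref{2.2-a}) gives at once
\[
\eta_R^*(n)=\tfrac12\,\eta^*(n),\qquad I_R(h,n)=\tfrac12\,I(h,n),
\]
valid for every $h>0$ and $n\ge 1$. Thus the claimed bound $\eta_R^*(n)>I_R(h,n)+\frac12 n^{1/2-\varepsilon}$ with $h=\sqrt n/(\log n)^{2\alpha+1+\varepsilon}$ is equivalent, after multiplying through by $2$, to
\[
\eta^*(n)>I\!\left(\frac{\sqrt n}{(\log n)^{2\alpha+1+\varepsilon}},\,n\right)+n^{1/2-\varepsilon},\qquad n\ge n_0 .
\]

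The second step is to quote Bass and Griffin \cite[(5.1)]{BG85}, which asserts exactly this last inequality: for any $\alpha>5$ and $\varepsilon>0$ there is an $n_0$ so that the displayed estimate holds almost surely for all $n\ge n_0$. Combining it with the two identities above and dividing by $2$ completes the argument, so no genuinely new probabilistic work is needed. The point that requires care --- and the one I would check most carefully --- is the alignment of normalizations: one must verify that the maximum-local-time functional, the window radius, the excess $n^{1/2-\varepsilon}$, and the exponent $2\alpha+1+\varepsilon$ appearing in \cite[(5.1)]{BG85} are all stated for the two-side local time $\eta$ normalized as in (\ref{equ-2.2-0}), rather than for a one-side or occupation-density variant; a mismatch here would shift the constant $\frac12$ in front of $n^{1/2-\varepsilon}$. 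Should \cite[(5.1)]{BG85} not be in precisely this form, the fallback is to reprove it by Brownian scaling, writing $\eta(x,n)\stackrel{d}{=}\sqrt n\,\eta(x/\sqrt n,1)$ so that the window $|x|\le h$ rescales to $|u|\le(\log n)^{-(2\alpha+1+\varepsilon)}\to 0$; the substance would then be a Borel--Cantelli argument controlling how close the local time in a shrinking neighborhood of the origin can come to the global maximum $\eta^*(1)$, which is where the real difficulty (and the content of \cite{BG85}) lies.
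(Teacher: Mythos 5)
Your proposal is correct and is exactly the paper's argument: the lemma is stated there as an immediate consequence of \cite[(5.1)]{BG85} together with the identity $\eta_R(x,t)=\frac12\eta(x,t)$ from (\ref{2.2-a}), which is precisely your halving/rescaling step (and explains the factor $\frac12$ in front of $n^{\frac12-\varepsilon}$). No further work is needed.
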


By \cite[Lemma 5.3]{BG85} and (\ref{2.2-a}), we have
\begin{lem}\label{edge-tool01}
For every $\varepsilon>0$
\begin{eqnarray*}
\sup_{k\in\mathbb{Z}}\sup_{t\leq n,x\in[k.k+1]}|\eta_R(x,t)-\eta_R(k,t)|=o(n^{\frac{1}{4}+\varepsilon})~~~~a.s.
\end{eqnarray*}
\end{lem}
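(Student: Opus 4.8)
\quad The plan is to transfer the statement to the two-side Brownian local time and then invoke \cite[Lemma 5.3]{BG85}. First I would use the identity \eqref{2.2-a}, namely $\eta_R(x,t)=\tfrac12\eta(x,t)$ for all $x\in\mathbb{R}$ and $t\ge 0$, to write
\begin{eqnarray*}
\sup_{k\in\mathbb{Z}}\,\sup_{t\leq n,\,x\in[k,k+1]}|\eta_R(x,t)-\eta_R(k,t)|
& = & \frac12\,\sup_{k\in\mathbb{Z}}\,\sup_{t\leq n,\,x\in[k,k+1]}|\eta(x,t)-\eta(k,t)|.
\end{eqnarray*}
It then suffices to establish the $o(n^{\frac14+\varepsilon})$ bound for the two-side local time $\eta$, the factor $\tfrac12$ being harmless since it is absorbed into the error term. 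The resulting statement is exactly the spatial modulus-of-continuity estimate for Brownian local time over unit intervals, uniform in time up to $n$, which is supplied by \cite[Lemma 5.3]{BG85}.

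To indicate why the two-side bound holds, were one to argue directly, I would exploit Brownian scaling: for $c>0$ the field $(x,t)\mapsto\eta(x,ct)$ has the same law as $(x,t)\mapsto\sqrt{c}\,\eta(x/\sqrt{c},t)$. Taking $c=n$, an increment $\eta(k+\theta,t)-\eta(k,t)$ with $t\leq n$ and $\theta\in[0,1]$ reduces in law to $\sqrt{n}$ times an increment of $x\mapsto\eta(x,s)$ (with $s\leq 1$) over a spatial interval of length at most $n^{-1/2}$. Since $x\mapsto\eta(x,s)$ is a.s. H\"{o}lder continuous of every order below $1/2$, uniformly for $s\le 1$, such an increment is of order $(n^{-1/2})^{1/2-\varepsilon'}=n^{-1/4+\varepsilon'/2}$, and multiplying by $\sqrt{n}$ recovers the target order $n^{1/4+\varepsilon}$. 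For uniformity, only $O(\sqrt{n\log\log n})$ values of $k$ contribute, since $\eta(x,n)=0$ for $|x|>\max_{s\le n}|W(s)|$ and the latter is $O(\sqrt{n\log\log n})$ a.s. by the law of the iterated logarithm; a union bound over these $k$, fed by high-order moment bounds for local-time increments and followed by the Borel-Cantelli lemma along $n_j=2^j$, would then give the almost sure conclusion.

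The hard part is the supremum over all times $t\le n$, rather than over the single endpoint $t=n$. Because $t\mapsto\eta(x,t)-\eta(k,t)$ is a difference of two increasing functions it is not monotone, so one cannot simply evaluate at $t=n$; one must instead control the whole trajectory of the increment process, uniformly in the spatial variables. This calls for a maximal/moment estimate for the local-time field regarded as a two-parameter process in $(x,t)$, which is precisely the technical core of \cite[Lemma 5.3]{BG85}. Combined with the reduction above, this establishes the lemma.
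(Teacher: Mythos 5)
Your proposal is correct and follows exactly the paper's route: the paper also obtains this lemma by combining the identity \eqref{2.2-a}, $\eta_R(x,t)=\tfrac12\eta(x,t)$, with the two-side modulus-of-continuity estimate of \cite[Lemma 5.3]{BG85}. The extra scaling/Borel--Cantelli heuristic you sketch is not needed (and is not given in the paper), since the cited lemma already carries the full technical load.
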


\begin{lem}\label{site-edge favor}
\begin{eqnarray*}
|\eta_R^*(n)-\xi_D^*(n)|=o(n^{\frac{1}{4}+\varepsilon})~~~~a.s.
\end{eqnarray*}
\end{lem}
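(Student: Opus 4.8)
The plan is to compare the two suprema through the common intermediate quantity $\sup_{k\in\mathbb{Z}}\eta_R(k,n)$, namely the one-side local time of the Wiener process sampled only at integer sites. The whole argument rests on two facts already established: the invariance principle of Theorem \ref{One-side Invariance principle}, which controls $\xi_D(k,n)-\eta_R(k,n)$ uniformly over $k\in\mathbb{Z}$, and Lemma \ref{edge-tool01}, which controls the oscillation of $x\mapsto\eta_R(x,n)$ across unit intervals; both error terms are $o(n^{\frac{1}{4}+\varepsilon})$ a.s.

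First I would fix $\varepsilon>0$ and record the elementary fact that for any two real-valued families $(a_k)$ and $(b_k)$ one has $|\sup_k a_k-\sup_k b_k|\le\sup_k|a_k-b_k|$. Applying this with $a_k=\xi_D(k,n)$ and $b_k=\eta_R(k,n)$ and invoking Theorem \ref{One-side Invariance principle}, I obtain
\begin{eqnarray*}
\Big|\xi_D^*(n)-\sup_{k\in\mathbb{Z}}\eta_R(k,n)\Big|\le\sup_{k\in\mathbb{Z}}|\xi_D(k,n)-\eta_R(k,n)|=o(n^{\frac{1}{4}+\varepsilon})\quad a.s.
\end{eqnarray*}

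Next I would pass from the integer supremum of $\eta_R$ to the genuine continuous supremum $\eta_R^*(n)=\sup_{x\in\mathbb{R}}\eta_R(x,n)$. For each real $x$ set $k=\lfloor x\rfloor$, so that $x\in[k,k+1]$ and, by Lemma \ref{edge-tool01} (with $t=n$), the quantity $|\eta_R(x,n)-\eta_R(k,n)|$ is $o(n^{\frac{1}{4}+\varepsilon})$ uniformly in $x$. Taking suprema in both directions gives
\begin{eqnarray*}
\Big|\eta_R^*(n)-\sup_{k\in\mathbb{Z}}\eta_R(k,n)\Big|=o(n^{\frac{1}{4}+\varepsilon})\quad a.s.
\end{eqnarray*}
Combining the two displays by the triangle inequality and absorbing the two $o(n^{\frac{1}{4}+\varepsilon})$ terms into a single one would complete the proof.

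The argument is essentially bookkeeping once Theorem \ref{One-side Invariance principle} and Lemma \ref{edge-tool01} are in hand; the only point requiring a little care is the continuous-to-discrete reduction in the last step, where Lemma \ref{edge-tool01} must be used uniformly over \emph{all} unit intervals, not merely those meeting a fixed compact set, so that the supremum over all real $x$ is genuinely captured. Since $\eta_R(x,n)=0$ once $|x|$ exceeds the range of the Wiener path up to time $n$, only finitely many intervals contribute and this uniformity causes no difficulty, so the stated bound follows.
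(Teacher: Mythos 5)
Your proof is correct and follows essentially the same route as the paper's: both reduce $|\eta_R^*(n)-\xi_D^*(n)|$ to the sum of the unit-interval oscillation of $\eta_R$ (Lemma \ref{edge-tool01}) and the uniform deviation $\sup_{k\in\mathbb{Z}}|\xi_D(k,n)-\eta_R(k,n)|$ (Theorem \ref{One-side Invariance principle}). Your intermediate quantity $\sup_{k\in\mathbb{Z}}\eta_R(k,n)$ is simply a cleaner bookkeeping of the paper's joint supremum over pairs $x\in\mathbb{R}$, $y\in\mathbb{Z}$ with $|x-y|\le 1$.
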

\noindent {\bf Proof.} We have
\begin{eqnarray*}
\eta_R^*(n)-\xi_D^*(n)&=&\sup_{x\in\mathbb{R}}\eta_R(x,n)-\sup_{x\in\mathbb{Z}}\xi_D(x,n)\\
&=&\sup_{x\in\mathbb{R},y\in\mathbb{Z},|x-y|\leq 1}\left[\eta_R(x,n)-\xi_D(y,n)+\xi_D(y,n)\right]-\sup_{x\in\mathbb{Z}}\xi_D(x,n)\\
&\leq&\sup_{x\in\mathbb{R},y\in\mathbb{Z},|x-y|\leq 1}\left(\eta_R(x,n)-\xi_D(y,n)\right)+\sup_{y\in \mathbb{Z}} \xi_D(y,n)-\sup_{x\in\mathbb{Z}}\xi_D(x,n)\\
&=&\sup_{x\in\mathbb{R},y\in\mathbb{Z},|x-y|\leq 1}\left(\eta_R(x,n)-\xi_D(y,n)\right)\\
&\leq&\sup_{x\in\mathbb{R},y\in\mathbb{Z},|x-y|\leq 1}\left|\eta_R(x,n)-\xi_D(y,n)\right|.
\end{eqnarray*}
Similarly, we have
\begin{eqnarray*}
\xi_D^*(n)-\eta_R^*(n)\leq\sup_{x\in\mathbb{R},y\in\mathbb{Z},|x-y|\leq 1}\left|\eta_R(x,n)-\xi_D(y,n)\right|.
\end{eqnarray*}
Hence
\begin{eqnarray*}
\left|\eta_R^*(n)-\xi_D^*(n)\right|\leq\sup_{x\in\mathbb{R},y\in\mathbb{Z},|x-y|\leq 1}\left|\eta_R(x,n)-\xi_D(y,n)\right|.
\end{eqnarray*}
Then by Lemma \ref{edge-tool01} and Theorem \ref{One-side Invariance principle}, we get
\begin{eqnarray*}
&&|\eta_R^*(n)-\xi_D^*(n)|\\
&&\leq\sup_{x\in\mathbb{R},y\in\mathbb{Z},|x-y|\leq 1}|\eta_R(x,n)-\eta_R(y,n)+\eta_R(y,n)-\xi_D(y,n)|\\
&&\leq\sup_{x\in\mathbb{R},y\in\mathbb{Z},|x-y|\leq 1}|\eta_R(x,n)-\eta_R(y,n)|+\sup_{y\in\mathbb{Z}}|\eta_R(y,n)-\xi_D(y,n)|\\
&&=o(n^{\frac{1}{4}+\varepsilon}).
\end{eqnarray*}
The proof is complete.\hfill\fbox

\noindent {\bf Proof of Proposition \ref{pro-2.5}.}
By Theorem \ref{One-side Invariance principle} we can find a simple symmetric random walk $S_n$ and a Wiener process $W(t)$
on the same probability space such that for each $\varepsilon>0$,
\begin{eqnarray}\label{proof-pro-2.5-a}
\sup_{x\in\mathbb{Z}}|\xi_D(x,n)-\eta_R(x,n)|=o(n^{\frac{1}{4}+\varepsilon})~~~~~a.s.
\end{eqnarray}
For  any $\alpha>5$ and  $\varepsilon>0$, let $K_n=\max_{x\in\mathbb{Z},|x|\leq\sqrt{n}(\log n)^{-(2\alpha+1+\varepsilon)}}\xi_D(x,n)$. By
(\ref{proof-pro-2.5-a}), Lemmas \ref{inf-edge} and \ref{site-edge favor},
\begin{eqnarray*}
\xi_D^*(n)&\geq&\eta_R^*(n)-cn^{\frac{1}{4}+\varepsilon}\\
&\geq&I_R\left(\frac{\sqrt{n}}{(\log n)^{2\alpha+1+\varepsilon}},n\right)+\frac{1}{2}n^{\frac{1}{2}-\varepsilon}-cn^{\frac{1}{4}+\varepsilon}\\
&\geq&K_n+\frac{1}{2}n^{\frac{1}{2}-\varepsilon}-2cn^{\frac{1}{4}+\varepsilon}
>K_n
\end{eqnarray*}
for $n$ sufficiently large.
Thus the most visited edges of $S_n$ must be larger in absolute value than $\sqrt{n}(\log n)^{-(2\alpha+1+\varepsilon)}$ for $n$ large. For any $\gamma>11$,
choosing $\alpha$ and $\varepsilon$ so that $2\alpha+1+\varepsilon<\gamma$,
we obtain (\ref{pro-2.5-a}).  The proof is complete.
\hfill\fbox

\section{Preliminaries for the proof of Theorem \ref{mainthm}}

In this section, we make some preparations for the proof of Theorem \ref{mainthm}.
These preparations are modifications of the corresponding materials from \cite{DS18} and \cite{To01}.

\subsection{Three consecutive favorite edges}

We define the inverse edge local times by
\begin{eqnarray*}
T_U(x,k):=\min\{n\geq 1:\xi_U(x,n)=k\}\quad \mbox{and}\quad T_D(x,k):=\min\{n\geq 1:\xi_D(x,n)=k\}.
\end{eqnarray*}
For any $x\in \mathbb{Z}$,
define
\begin{eqnarray*}
u(x)&:=&\sum_{n=1}^{\infty}\textbf{1}_{\{S_{n-1}=x-2,S_n=x-1,x\in\mathcal{K}(n),\#\mathcal{K}(n)=3\}}\nonumber\\
&=&\sum_{k=1}^{\infty}\textbf{1}_{\{x\in\mathcal{K}(T_U(x-1,k)),
\#\mathcal{K}(T_U(x-1,k))=3\}}\nonumber\\
&=&\sum_{k=0}^{\infty}\sum_{h=1}^{\infty}
\textbf{1}_{\{x\in\mathcal{K}(T_U(x-1,k+1)),\#\mathcal{K}(T_U(x-1,k+1))=3,
L(x,T_U(x-1,k+1))=h\}}.
\end{eqnarray*}
Thus $f(3)\geq \sum_{x\in\mathbb{Z}}u(x)$.

For $x\in \mathbb{Z}$, $h, k\in \mathbb{N}$, we define
\begin{eqnarray}\label{3.1}
A_{x,h}^{(k)}&:=&\left\{\mathcal{K}(T_U(x-1,k+1))=\{x,x+1,x+2\},L(x,T_U(x-1,k+1))=h\right\}.
\end{eqnarray}
Here we use $T_U(x-1,k+1)$ instead of $T_U(x,k+1)$, which was used in \cite{DS18}, due to the the following two reasons:
\begin{itemize}
\item[(i)] {\it  If $x>0$, then by the definition of local time on the edge $x$ at
the time $n=T_U(x,k+1)$, the set of favorite edges $\mathcal{K}(n)$ is not equal to $\{x,x+1,x+2\}$, since $L(x+1,n)$ and $L(x+2,n)$ are even numbers and $L(x,n)$ is an odd number.}

\item[(ii)] {\it $T_U(x-1,k+1)$ is useful to obtain the lower bound on the first moment in Section 4.1.}
\end{itemize}

Note that the definition of $T_U(x-1,k+1)$ implies $S_{T_U(x-1,k+1)}=x-1$ and \linebreak $\xi_U\left(x-1,T_{U}(k+1,x-1)\right)=k+1$. Thus by \eqref{UDcrossings}, we have for $x\ge 1$
\begin{eqnarray*}
L(x-1,T_U(x-1,k+1))=2k+2-\textbf{1}_{\{0<x-1\}}.
\end{eqnarray*}
Hence
\begin{equation*}
L(x-1,T_U(x-1,k+1))=\left\{
\begin{aligned}
&2k+1, &\text{if~$x>1$},\\
&2k+2,
&\text{if~$x= 1$}.
\end{aligned}\right.
\end{equation*}
Again using \eqref{UDcrossings}, we can easily see that, when $x\ge 1$,
the $h$ in $A_{x,h}^{(k)}$ has to be even.
In the following, we implicitly assume $x>1$, which implies that $L(x-1,T_U(x-1,k+1))=2k+1$, unless explicitly mentioned otherwise.

We write the events $A_{x,h}^{(k)}$ in terms of $T_U(x-1,k+1)$ since the events
defined this way match
the form of the Ray-Knight representation to be discussed later. Let $K_h=\Big(\frac{1}{2}(h-2\sqrt{h}),\frac{1}{2}(h-\sqrt{h})\Big)$ and define
\begin{eqnarray}\label{N_H}
N_H:=\sum_{h=8}^{H}\sum_{k\in K_{2h}}\sum_{x=2}^{+\infty}\textbf{1}_{A_{x,2h}^{(k)}}\quad \mbox{and}\quad  N:=\lim_{H\to\infty}N_H=\sum_{h=8}^{+\infty}\sum_{k\in K_{2h}}\sum_{x=2}^{+\infty}\textbf{1}_{A_{x,2h}^{(k)}}.
\end{eqnarray}

We note that for each $h$, the events $A_{x,h}^{(k)},x\in\mathbb{Z},k\in K_h$ are mutually disjoint.
Since $f(3)\ge \sum_{x\in \mathbb{Z}}u(x)$, we have that $f(3)\geq N$, and thus it is enough to show that $N=\infty$ a.s.

\subsection{Branching process and the Ray-Knight representation}

In the remainder of this paper, we denote by $Y_n$ a critical Galton-Watson branching process with geometric offspring distribution, and by $Z_n,R_n$ two related critical Galton-Watson branching processes with immigration.
The precise definitions of these processes are as follows: Let $(X_{n, i})_{n, i}$
be i.i.d. geometric variables with mean 1, that is, for all $k\geq 0,P(X_{n,i}=k)=\frac{1}{2^{k+1}}$. We recursively define
\begin{eqnarray}\label{GWbp}
Y_{n+1}=\sum_{i=1}^{Y_n}X_{n,i},\ Z_{n+1}=\sum_{i=1}^{Z_n+1}X_{n,i},\ R_{n+1}=1+\sum_{i=1}^{R_n}X_{n,i}.
\end{eqnarray}
One can check that $Y_n,Z_n$ and $R_n$ are Markov chains with state space
$\mathbb{N}$ and transition probabilities:
\begin{equation}\label{TP}
P(Y_{n+1}=j|Y_n=i)=\pi(i,j):=\left\{
\begin{aligned}
&\delta_0(j), &\text{if~$i=0$},\\
&2^{-i-j}\frac{(i+j-1)!}{(i-1)!j!}, &\text{if~$i>0$},
\end{aligned}\right.
\end{equation}
\begin{eqnarray}
P(Z_{n+1}=j|Z_n=i)=\rho(i,j):=\pi(i+1,j),\\
P(R_{n+1}=j|R_n=i)=\rho^{*}(i,j):=\pi(i,j-1).
\end{eqnarray}

Let $k\geq 0$ and $x$ be fixed integers. When $x-1\geq 1$, we define the following three processes:

$1$, $(Z_n^{(k)})_{n\geq 0}$ is a Markov chain with transition probabilities $\rho(i,j)$ and initial state $Z^{(k)}_0=k$.

$2$, $(Y_n^{(k)})_{n\geq -1}$ is a Markov chain with transition probabilities $\pi(i,j)$ and initial state $Y^{(k)}_{-1}=k$.

$3$, $(Y_n^{'(k)})_{n\geq 0}$ is a Markov chain with transition probabilities $\pi(i,j)$ and initial state $Y_{0}^{'(k)}=Z_{x-1}^{(k)}$.

\noindent We assume that the three processes are independent, except that $Y^{'(k)}$ starts from
$Z_{x-1}^{(k)}$.
We patch the above three processes together to a single process as follows:
\begin{equation}\label{RK1}
\Delta_x^{(k)}(y):=\left\{
\begin{array}{ll}
Z_{x-1-y}^{(k)}, &\text{if~$0\leq y\leq x-1$},\\
Y_{y-x}^{(k)}, &\text{if~$x-1\leq y\leq \infty$},\\
Y_{-y}^{'(k)},&\text{if~$-\infty<y\leq 0$}.
\end{array}\right.
\end{equation}
By the Ray-Knight theorem on local times of simple random walk on $\mathbb{Z}$
(c.f. \cite[Theorem~1.1]{Kn63}),
we know that for any integers $x\geq 2$ and $k\geq 0$,
\begin{eqnarray}\label{3.6}
(\xi_D(y,T_U(x-1,k+1)),y\in\mathbb{Z})\overset{{\rm law}}{=}(\Delta_{x-1}^{(k)}(y),y\in\mathbb{Z}).
\end{eqnarray}

Similarly, when $x-1\leq 0$, we define the processes:

$1$, $(R_n^{(k)})_{n\geq -1}$ is a Markov chain with transition probabilities $\rho^*(i,j)$ and initial state $R^{(k)}_{-1}=k$.

$2$, $(Y_n^{(k)})_{n\geq 0}$ is a Markov chain with transition probabilities $\pi(i,j)$ and initial state $Y^{(k)}_{0}=k$.

$3$, $(Y_n^{'(k)})_{n\geq -1}$ is a Markov chain with transition probabilities $\pi(i,j)$ and initial state $Y_{-1}^{'(k)}=R_{-1-x}^{(k)}$.

\noindent
We assume that the three processes are independent, except that $Y^{'(k)}$ starts from $R_{-1-x}^{(k)}$.
In this case, we patch the three processes together as follows:
\begin{equation}\label{RK2}
\Delta_x^{(k)}(y)\triangleq\left\{
\begin{array}{ll}
Y_{y}^{'(k)}, &\text{if~$-1\leq y< \infty$},\\
R_{y-x}^{(k)}, &\text{if~$x-1\leq y\leq -1$},\\
Y_{x-1-y}^{(k)},&\text{if~$-\infty<y\leq x-1$}.
\end{array}\right.
\end{equation}
By the Ray-Knight theorem, we get that for the case $k\geq 0,x\leq 1$:
\begin{eqnarray}
(\xi_D(y,T_U(x-1,k+1)),y\in\mathbb{Z})\overset{{\rm law}}{=}(\Delta_{x-1}^{(k)}(y),y\in\mathbb{Z}).
\end{eqnarray}

\subsection{Three favorite edges under Ray-Knight representation}

For $h\in\mathbb{N}$, define the first hitting times
of $[h,+\infty)$ for $Y_n^{(k)}$ and $Z_n^{(k)}$ to be $\sigma_{h}^{(k)}$ and $\tau_h^{(k)}$ respectively, and the extinction time of $Y_n^{(k)}$ to be $\omega^{(k)}$. That is,
\begin{eqnarray}\label{tingshi}
\sigma_h^{(k)}:=\min\{n\geq 0: Y_{n}^{(k)}\geq h\},\tau_h^{(k)}:=\min\{n\geq 0:Z_n^{(k)}\geq h\},\omega^{(k)}:=\min\{n\geq 0:Y_n^{(k)}=0\}.\quad
\end{eqnarray}
Using the notation above, we can express $P(A_{x,h}^{(k)})$ in its Ray-Knight representation form. In the remainder of this section, we let
$\tilde{n}:=T_U(x-1,k+1)$ for simplicity.
Now, on $A_{x,h}^{(k)}$,  we have that
$L(x,\tilde{n})=L(x+1,\tilde{n})=L(x+2,\tilde{n})=h$, $h>L(y,\tilde{n})=2\xi_D(y-1,\tilde{n})+\textbf{1}_{\{0<y\leq x-1\}}$ for
$x\ge 2,$ $y\neq x,x+1,x+2$. We have the following five cases:

(1) $y\leq 0,~\xi_D(y-1,\tilde{n})=\frac{L(y,\tilde{n})}{2}<\frac{h}{2}$;

(2) $y\in [1,x-2],~\xi_D(y-1,\tilde{n})=\frac{L(y,\tilde{n})-1}{2}<\frac{h-1}{2}$;

(3) $y=x-1,~\xi_D(y-1,\tilde{n})=\frac{L(y,\tilde{n})-1}{2}<\frac{h-1}{2}$;

(4) $y=x,x+1,x+2,~\xi_D(y-1,\tilde{n})=\frac{L(y,\tilde{n})}{2}=\frac{h}{2}$;

(5) $y\geq x+3,~\xi_D(y-1,\tilde{n})=\frac{L(y,\tilde{n})}{2}<\frac{h}{2}.$\\

Then by (\ref{3.6}), we obtain that
\begin{eqnarray}\label{3.11}
P(A_{x,h}^{(k)})&=&P\left(Y_0^{(k)}=Y_1^{(k)}=Y_2^{(k)}=\frac{h}{2},\left
\{Y_n^{(k)}<\frac{h}{2},n\geq 3\right\},\right.\nonumber \\
&&\quad\quad\left.\left\{Z_n^{(k)}<\frac{h-1}{2},1\leq n\leq x-2\right\},\left\{Y_n^{'(k)}<\frac{h}{2},n\geq 1\right\}\right).
\end{eqnarray}

For all the notation above, when the initial state of a process is obvious, we omit the superscript $``(k)"$ for simplicity. We will also use conditional probability $P(\cdot|Y_0=k)$ to indicate the initial state.

\subsection{Standard lemmas}

In this subsection we recall a few lemmas that will be useful later. In what follows, $c_i$ for $i\geq 1$ and $c$ are all constants.

\begin{lem}(\cite[Lemma~2.2]{DS18})\label{Lem01}
We have that

$(1)$ For $i,j\in\Big(\frac{1}{2}(h-10\sqrt{h}),\frac{1}{2}(h+10\sqrt{h})\Big)$, there exists positive constants $c$ and $C$ such that $ch^{-\frac{1}{2}}\leq \pi(i,j)\leq Ch^{-\frac{1}{2}}$ for all $h\geq 100$.

$(2)$ For $i+j=h$, $\pi(i,j)\leq O(1)h^{-\frac{1}{2}}$.

$(3)$ For $j<i_1<i_2$, $\pi(i_1,j)>\pi(i_2,j)$.
\end{lem}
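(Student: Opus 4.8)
The final statement to prove is Lemma~\ref{Lem01}, which is quoted from \cite[Lemma~2.2]{DS18}, giving three estimates on the transition kernel $\pi(i,j)=2^{-i-j}\binom{i+j-1}{j}$ of the critical geometric Galton--Watson chain. Let me plan a self-contained proof of the three parts.

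The plan is to work directly from the closed form $\pi(i,j)=2^{-i-j}\frac{(i+j-1)!}{(i-1)!\,j!}$ for $i>0$ and apply Stirling's formula. For part (1), I would set $s:=i+j$ and write $\pi(i,j)=2^{-s}\binom{s-1}{j}$. Since both $i$ and $j$ lie in the window $\bigl(\tfrac12(h-10\sqrt h),\tfrac12(h+10\sqrt h)\bigr)$, the sum $s=i+j$ satisfies $|s-h|\le 10\sqrt h$, so $2^{-s}=2^{-h}\cdot 2^{h-s}$ with $2^{h-s}$ bounded above and below by positive constants (since $|h-s|\le 10\sqrt h$ forces $|h-s|=O(\sqrt h)$, and one must be slightly careful here: $2^{10\sqrt h}$ is \emph{not} bounded, so this factor must be tracked together with the binomial term rather than separated off). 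The cleaner route is to apply Stirling to the whole binomial $\binom{s-1}{j}$ at once. Writing $j=\tfrac{s}{2}+t$ with $|t|=O(\sqrt h)$, the local central limit behaviour of the binomial gives $2^{-(s-1)}\binom{s-1}{j}\asymp \frac{1}{\sqrt{s}}\exp\bigl(-\tfrac{2t^2}{s}\bigr)$; since $t^2/s=O(1)$ throughout the window, the exponential is bounded between two positive constants, and $\sqrt s\asymp\sqrt h$. This yields $\pi(i,j)\asymp h^{-1/2}$ uniformly, which is exactly part (1).

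For part (2), with $i+j=h$ fixed we need only the upper bound $\pi(i,j)=2^{-h}\binom{h-1}{j}\le 2^{-h}\binom{h-1}{\lfloor (h-1)/2\rfloor}=O(h^{-1/2})$, using that the central binomial coefficient dominates all others and that $2^{-(h-1)}\binom{h-1}{\lfloor(h-1)/2\rfloor}\sim \sqrt{2/(\pi h)}$ by Stirling. Part (3) is the monotonicity statement: for fixed $j$ and $j<i_1<i_2$, I would show $\pi(i,j)$ is strictly decreasing in $i$ by examining the ratio
\begin{eqnarray*}
\frac{\pi(i+1,j)}{\pi(i,j)}=\frac{1}{2}\cdot\frac{i+j}{i},
\end{eqnarray*}
which is $<1$ precisely when $i+j<2i$, i.e. when $j<i$; since $i_1>j$ this ratio stays below $1$ for every integer step from $i_1$ up to $i_2$, giving strict decrease. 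Iterating the ratio inequality from $i_1$ to $i_2$ then yields $\pi(i_1,j)>\pi(i_2,j)$.

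The main obstacle is the uniformity in part (1): the constants $c,C$ must not depend on the particular $i,j$ within the window or on $h$. The subtlety is that the factor $2^{h-s}$ with $|h-s|\le 10\sqrt h$ is individually unbounded, so one cannot crudely factor out $2^{-h}$; the exponential decay $\exp(-2t^2/s)$ coming from the Stirling expansion of the binomial must be kept alongside it, and one verifies that the combined expression $2^{-s}\binom{s-1}{j}$ is genuinely $\Theta(h^{-1/2})$ on the whole window. Once the Stirling estimate is written with explicit error control (valid for $h\ge 100$ as stated, so that $i,j$ are bounded away from the endpoints $0$ and $s$), the bounds follow. Since this lemma is merely quoted from \cite[Lemma~2.2]{DS18}, in the paper itself one may simply cite that reference; the Stirling computation above is how one would verify it independently.
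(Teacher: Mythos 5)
The paper contains no proof of this lemma at all: it is quoted verbatim from \cite[Lemma~2.2]{DS18} and used as a black box, so your self-contained verification is necessarily a different route from the paper's, which is pure citation. Your parts (2) and (3) are correct and complete: the ratio computation $\pi(i+1,j)/\pi(i,j)=\frac{1}{2}\cdot\frac{i+j}{i}<1$ precisely when $j<i$ is exactly right, and iterating it over all integers $i$ with $i_1\le i<i_2$ (each of which satisfies $i\ge i_1>j$) gives the strict monotonicity; the central-binomial bound $2^{-h}\binom{h-1}{j}\le 2^{-h}\binom{h-1}{\lfloor (h-1)/2\rfloor}=O(h^{-1/2})$ settles (2). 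For part (1), the local-CLT/Stirling estimate $2^{-(s-1)}\binom{s-1}{j}\asymp s^{-1/2}\exp(-2t^{2}/s)$ is the standard and correct device, and you rightly flag that one cannot split off the factor $2^{h-s}$ on its own. What the paper's citation buys is brevity; what your computation buys is self-containedness and visible constants.

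One concrete repair is needed in part (1). Your parenthetical claim that $h\ge 100$ keeps $i,j$ bounded away from the endpoints $0$ and $s$ is false at the bottom of the range: for $h=100$ the window is $(0,100)$, so $(i,j)=(1,99)$ is admissible, where $\pi(1,99)=2^{-100}$, and the Gaussian approximation overestimates this by an enormous multiplicative factor, so no uniform two-sided Stirling control holds there. The lemma itself survives (one only needs \emph{some} positive $c$), and the patch is easy: for $h\ge 400$ one has $\frac{1}{2}(h-10\sqrt{h})\ge h/4$, hence $i/s,\,j/s\in[1/6,5/6]$ and $t^{2}/s\le 50$ throughout the window, so Stirling applies with uniformly bounded multiplicative errors and yields $\pi(i,j)\asymp h^{-1/2}$; the remaining range $100\le h<400$ contributes only finitely many triples $(h,i,j)$, each with $0<\pi(i,j)\le 1$, and these are absorbed by shrinking $c$ and enlarging $C$. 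With that adjustment your argument is a complete and correct substitute for the citation.
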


\begin{lem}(\cite[Lemma~2.3]{DS18})\label{Lem02}
For any $h\in\mathbb{N}$, it holds that
$E\tau_h=EZ_{\tau_h}-Z_0$.
In particular, for any $0\le k\le h$,
we have that $E[\tau_h|Z_0=k]\geq h-k$.
\end{lem}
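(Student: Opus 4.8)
The plan is to recognize that the immigration process $Z_n$ has constant unit upward drift, so that $Z_n-n$ is a martingale, and then to read the identity off from the optional stopping theorem applied at $\tau_h$. First I would compute the one-step conditional mean. Since the $X_{n,i}$ are i.i.d.\ geometric with mean $1$ and $Z_{n+1}=\sum_{i=1}^{Z_n+1}X_{n,i}$, the Markov property gives $E[Z_{n+1}\mid\mathcal{F}_n]=Z_n+1$, so $M_n:=Z_n-n$ is a martingale with $M_0=Z_0$. For each fixed $n$ the stopping time $n\wedge\tau_h$ is bounded, so optional stopping applies with no side condition and yields
\begin{equation*}
E\big[Z_{n\wedge\tau_h}\big]=Z_0+E\big[n\wedge\tau_h\big].
\end{equation*}

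The only delicate point is justifying the passage $n\to\infty$, and the key input is that $E\tau_h<\infty$. This I would obtain from a uniform overshoot bound: for every state $i$,
\begin{equation*}
P\big(Z_{n+1}\ge h\mid Z_n=i\big)\ge P\big(X_{n,1}\ge h\big)=2^{-h},
\end{equation*}
because the sum defining $Z_{n+1}$ always has at least one term. By the Markov property $P(\tau_h>m)\le(1-2^{-h})^m$, so $\tau_h<\infty$ a.s.\ and $E\tau_h\le 2^h$. With this in hand the limit is routine: on the right, $n\wedge\tau_h\uparrow\tau_h$, so $E[n\wedge\tau_h]\to E\tau_h$ by monotone convergence. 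On the left I would split $Z_{n\wedge\tau_h}=Z_n\mathbf{1}_{\{n<\tau_h\}}+Z_{\tau_h}\mathbf{1}_{\{n\ge\tau_h\}}$; the first term is bounded by $h$ and tends to $0$ a.s.\ (bounded convergence), while the second increases to $Z_{\tau_h}$ (monotone convergence). Hence the left side converges to $E Z_{\tau_h}$, and since the right side has the finite limit $Z_0+E\tau_h$, this simultaneously forces $E Z_{\tau_h}<\infty$.

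Passing to the limit then gives $E Z_{\tau_h}=Z_0+E\tau_h$, i.e.\ $E\tau_h=E Z_{\tau_h}-Z_0$, which is the main identity. For the second assertion, the definition of $\tau_h$ forces $Z_{\tau_h}\ge h$, so $E Z_{\tau_h}\ge h$; specializing to $Z_0=k$ with $0\le k\le h$ then yields $E[\tau_h\mid Z_0=k]=E Z_{\tau_h}-k\ge h-k$. I expect the main obstacle to be precisely the interchange of limit and expectation in the optional stopping step; everything rests on the a priori bound $E\tau_h<\infty$, which is why the uniform single-step overshoot estimate $2^{-h}$ is the crucial ingredient.
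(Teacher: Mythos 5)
Your proof is correct and follows essentially the same route as the paper's source for this lemma (the paper simply cites \cite[Lemma~2.3]{DS18}, whose argument is exactly this: $Z_n-n$ is a martingale because the offspring variables have mean $1$, and the identity follows from optional stopping at $\tau_h$). Your additional justification of the passage to the limit via the uniform one-step overshoot bound $P(Z_{n+1}\ge h\mid Z_n=i)\ge 2^{-h}$, which gives $E\tau_h<\infty$, is a careful and valid way to make the optional stopping step rigorous.
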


\begin{lem}(\cite[(6.18)]{To01})\label{Lem03}
There exists a constant $C<\infty$ such that, for any $0\leq k<h$,
\begin{eqnarray*}
E(Z_{\tau_h}|Z_0=k)\leq h+Ch^{\frac{1}{2}}.
\end{eqnarray*}
\end{lem}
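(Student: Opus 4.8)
The plan is to reduce the claim to a clean single-generation overshoot estimate. Since $Z_{\tau_h}\ge h$ always, writing $E(Z_{\tau_h}\mid Z_0=k)=h+E(Z_{\tau_h}-h\mid Z_0=k)$ shows that it suffices to bound the expected overshoot $E(Z_{\tau_h}-h\mid Z_0=k)$ by $C\sqrt h$, uniformly in $0\le k<h$. For $m\ge 0$ let $G_m$ denote a sum of $m+1$ i.i.d.\ copies of the offspring law, i.e.\ $G_m\overset{\rm law}{=}\sum_{i=1}^{m+1}X_{0,i}$, so that a single transition of $Z$ from state $m$ has the law of $G_m$. Conditioning on the value $m=Z_{\tau_h-1}$ of the chain just before it first reaches level $h$, I would show that the crossing increment has the law of $G_m$ conditioned on $\{G_m\ge h\}$, whence
\begin{eqnarray*}
E(Z_{\tau_h}-h\mid Z_0=k)=E\big(g(Z_{\tau_h-1})\mid Z_0=k\big)\le \sup_{0\le m<h}g(m),\qquad g(m):=E\big(G_m-h\mid G_m\ge h\big).
\end{eqnarray*}
Thus the whole problem collapses to proving the deterministic bound $g(m)\le C\sqrt h$ for every $0\le m<h$.

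To justify the conditional-law statement rigorously (note that $\tau_h-1$ is not a stopping time), I would use a Green's-function decomposition. The events $\{\tau_h\ge n,\,Z_{n-1}=m\}$ are $\mathcal F_{n-1}$-measurable, and on each of them the next increment is a fresh copy of $G_m$; summing over $n$ gives $P(Z_{\tau_h}\ge h+t,\,Z_{\tau_h-1}=m)=r_m\,P(G_m\ge h+t)$, where $r_m$ is the expected number of visits of $Z$ to $m$ strictly before $\tau_h$. Taking $t=0$ shows $\sum_m r_m P(G_m\ge h)=1$, and dividing yields $P(Z_{\tau_h}\ge h+t\mid Z_{\tau_h-1}=m)=P(G_m\ge h+t\mid G_m\ge h)$, which is exactly what is needed above.

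The crux is the estimate $g(m)\le C\sqrt h$. Here I would use the explicit negative-binomial law $P(G_m=j)=\binom{m+j}{j}2^{-(m+1+j)}$, whose consecutive ratio is $a_j:=P(G_m=j+1)/P(G_m=j)=\frac{m+1+j}{2(j+1)}$; this is strictly less than $1$ and nonincreasing in $j$ for $j\ge h>m$. Monotonicity of $a_j$ gives the tail comparison $P(G_m\ge h+t\mid G_m\ge h)\le P(G_m=h+t)/P(G_m=h)=\prod_{i=0}^{t-1}a_{h+i}$, and the constraint $m+1\le h$ yields, for $0\le i\le h$, the Gaussian-scale bound $a_{h+i}\le 1-\tfrac{i}{5h}\le e^{-i/(5h)}$ (with an even faster, geometric decay once $i>h$, contributing negligibly). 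Consequently $P(G_m\ge h+t\mid G_m\ge h)\le \exp\!\big(-t(t-1)/(10h)\big)$, and summing the conditional tail over $t\ge 1$ gives $g(m)=\sum_{t\ge 1}P(G_m\ge h+t\mid G_m\ge h)\le C\sqrt h$ uniformly in $m$. I expect this last step to be the main obstacle: the subtle regime is $m\approx h$, where the jump law has mean essentially $h$ and fluctuations of order $\sqrt h$, so the overshoot is genuinely of order $\sqrt h$; the key technical point that prevents a weaker $O(h)$ bound is using the \emph{decrease} of the ratio $a_j$ (hence the length scale $\sqrt h$ rather than $h$) instead of the crude geometric estimate $a_{h+i}\le a_h$. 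Combining the three steps gives $E(Z_{\tau_h}\mid Z_0=k)\le h+C\sqrt h$, which is the assertion (and, via Lemma~\ref{Lem02}, sharpens $E\tau_h$ to $h-k+O(\sqrt h)$).
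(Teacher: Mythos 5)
This lemma is not proved in the paper at all: it is imported, statement and all, from T\'{o}th \cite[(6.18)]{To01}, so there is no internal argument to compare yours against; what you have produced is a self-contained proof, and after checking each step I find it correct. The reduction to the overshoot is sound: since $k<h$ forces $\tau_h\ge 1$ and $Z_{\tau_h-1}<h$, and since $\{\tau_h\ge n,\,Z_{n-1}=m\}\in\mathcal{F}_{n-1}$ while the $n$-th step uses fresh geometric variables, your occupation-measure identity $P(Z_{\tau_h}\ge h+t,\,Z_{\tau_h-1}=m)=r_m P(G_m\ge h+t)$ is valid, and dividing by its $t=0$ case gives the conditioned-jump law. (This also uses $\tau_h<\infty$ a.s., which you should state explicitly; it follows because from every $m<h$ the one-step probability of entering $[h,\infty)$ is bounded away from $0$, so $\tau_h$ is dominated by a geometric time, which also makes $r_m$ finite.) The jump law $P(G_m=j)=\binom{m+j}{j}2^{-(m+1+j)}$ agrees with the paper's kernel $\rho(m,j)=\pi(m+1,j)$; the ratio $a_j=\frac{m+1+j}{2(j+1)}$ is indeed nonincreasing in $j$; the termwise comparison $P(G_m=h+t+s)\le\frac{P(G_m=h+t)}{P(G_m=h)}P(G_m=h+s)$, summed over $s$, yields your tail inequality; and with $m+1\le h$ the elementary inequality $5h(i+2)\ge i(2h+2i+2)$ for $0\le i\le h$ confirms $a_{h+i}\le 1-\frac{i}{5h}$, so that $\sum_{1\le t\le h+1}\exp\left(-t(t-1)/(10h)\right)=O(\sqrt h)$, while $a_{h+i}\le 3/4$ for $i\ge h$ makes the remaining sum over $t>h+1$ of size $O(1)$. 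In a final write-up, just be explicit that the Gaussian-scale bound is used only for $t\le h+1$ and the geometric bound beyond that point, exactly as you sketch. What your route buys, compared with simply citing T\'{o}th, is transparency about where $\sqrt h$ comes from: the pre-crossing state $m$ can be as large as $h-1$, so the crossing jump has fluctuations of order $\sqrt h$, and it is precisely the decrease of the likelihood ratio $a_j$ (rather than the crude bound $a_{h+i}\le a_h$, which would only give $O(h)$) that caps the overshoot at that scale; as you note, via Lemma \ref{Lem02} this also sharpens $E[\tau_h|Z_0=k]$ to $h-k+O(\sqrt h)$.
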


\section{Proof of Theorem \ref{mainthm}}

In this section, we give the proof of Theorem \ref{mainthm} by following
the idea of \cite{DS18}.  We spell out some details for the reader's convenience and point out some modifications that need to be made.

\subsection{Lower bound on the first moment.}

The following is the counterpart to \cite[Lemma 3.1]{DS18}.

\begin{lem}\label{Lem-First Moment}
Suppose that $Z_0=k\in K_h=[\frac{h-2\sqrt{h}}{2},\frac{h-\sqrt{h}}{2}]$. Then there exists a constant $c>0$ such that for
any $h>4$,
$$E\left(\sum_{n=1}^{\tau_{\frac{h-1}{2}}}\frac{h/2-Z_n}{h/2}\right)\geq c\sqrt{h}.$$
\end{lem}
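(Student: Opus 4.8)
The plan is to convert the sum into a two-moment computation for the stopped chain, using the two natural martingales of $(Z_n)$, and then read off the $\sqrt h$ order from $E\tau$, the correction being a variance term. Throughout write $h':=\tfrac{h-1}{2}$, $\tau:=\tau_{(h-1)/2}$ and $\mu:=EZ_\tau$. Since the offspring law is geometric with mean $1$ and variance $2$, we have $E[Z_{n+1}|Z_n=i]=i+1$ and $E[Z_{n+1}^2|Z_n=i]=(i+1)^2+2(i+1)$; hence, writing $\mathcal L$ for the one-step increment of the expectation, $\mathcal L\,\mathrm{id}(i)=1$ and $\mathcal L(i^2)=4i+3$, so that $Z_n-n$ and $Z_n^2-\sum_{j<n}(4Z_j+3)$ are martingales. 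First I would apply optional stopping at $\tau$: here $E\tau<\infty$ by Lemmas \ref{Lem02}--\ref{Lem03}, and since $Z_{\tau\wedge m}\le h'+Z_\tau$ and $\sum_{j<\tau\wedge m}(4Z_j+3)\le(4h'+3)\tau$, the passage to the limit is justified once one knows $E[Z_\tau^2]<\infty$ (clear from the geometric tails). This gives the identities $E\tau=\mu-k$ and $E[Z_\tau^2]=k^2+4E[\sum_{j<\tau}Z_j]+3E\tau$.

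Next I would center everything at $h'$. Put $S':=\sum_{n=0}^{\tau-1}(h'-Z_n)\ge0$; using $\tfrac h2=h'+\tfrac12$, the sum in the statement equals $\tfrac 2h\big(S'-(Z_\tau-h')-(h'-k)+\tfrac\tau2\big)$, and the three boundary terms are $O(\sqrt h)$ (by $Z_\tau\ge h'$, Lemma \ref{Lem03}, and $E\tau=O(\sqrt h)$), hence contribute only $O(h^{-1/2})$. Substituting $\sum_{j<\tau}Z_j=h'\tau-S'$ into the quadratic identity and using $E\tau=\mu-k$ solves for
\[
E[S']=(\mu-k)\Big(h'+\tfrac34-\tfrac{\mu+k}{4}\Big)-\tfrac14\,\mathrm{Var}(Z_\tau).
\]
As $\mu,k=\tfrac h2+O(\sqrt h)$, the bracket is $\tfrac{h'}2+O(\sqrt h)$, and therefore
\[
E\!\left(\sum_{n=1}^{\tau}\frac{h/2-Z_n}{h/2}\right)=\frac{E\tau}{2}-\frac{\mathrm{Var}(Z_\tau)}{2h}+O(1).
\]

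To conclude, Lemma \ref{Lem02} gives $E\tau=\mu-k\ge h'-k\ge\tfrac{\sqrt h-1}{2}$, so the leading term is already $\ge\tfrac{\sqrt h}{4}(1-o(1))$, while Lemma \ref{Lem03} ($\mu\le h'+C\sqrt{h'}$) controls the bracket. It remains only to show $\mathrm{Var}(Z_\tau)/(2h)=o(\sqrt h)$; the clean sufficient statement is $\mathrm{Var}(Z_\tau)=O(h)$, and with it the displayed quantity is $\ge c\sqrt h$ for all large $h$, the bounded range of remaining $h>4$ being absorbed into $c$.

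I expect this variance bound to be the main obstacle; note it is genuinely a second-order fact, since Lemma \ref{Lem03} controls only the mean of the overshoot $Z_\tau-h'$, and a Markov bound on that mean gives a divergent estimate for $E[(Z_\tau-h')^2]$. My plan for it is to decompose over the last state before crossing, $E[(Z_\tau-h')^2]=\sum_i G(k,i)\,w_i$, where $G(k,i)$ is the expected number of visits to $i$ before $\tau$ and $w_i:=\sum_{j\ge h'}(j-h')^2\rho(i,j)$. The renewal identity $\sum_i G(k,i)q_i=1$ (exactly one crossing occurs, $q_i:=\sum_{j\ge h'}\rho(i,j)$) reduces the task to the uniform comparison $w_i\le Ch\,q_i$, i.e.\ a bound $O(h)$ on the conditional second moment of the overshoot out of any state $i$. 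For $i$ within $O(\sqrt h)$ of $h'$ this follows from $\mathrm{Var}(Z_{n+1}|Z_n=i)=2(i+1)=O(h)$ and Lemma \ref{Lem01}; the delicate case is $i$ far below $h'$, where crossing is a large deviation of a sum of geometrics, so that one must invoke the explicit exponential tails of $\rho(i,\cdot)$ to see that the conditional overshoot remains $O(h)$ (in fact $O(1)$). Establishing $w_i\le Ch\,q_i$ uniformly in $i$ is the crux.
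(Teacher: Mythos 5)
Your computation is correct as far as it goes, and it is essentially the paper's own argument in different clothing: the paper uses the martingales $M_n=\sum_{s\le n}(Z_s-s)-n(Z_n-n)$ and $M_n'=-\frac14 Z_n^2+nZ_n-\frac12 n^2+\frac14 n$, which carry exactly the same information as your compensated linear and quadratic martingales, applies optional stopping at $\tau:=\tau_{(h-1)/2}$, and finishes with Lemmas \ref{Lem02} and \ref{Lem03}. Indeed, writing $k=Z_0$, $A=2h-1-(Z_\tau+Z_0)$ and $B=Z_\tau-Z_0$, the paper's exact identity
\begin{equation*}
E\left(\sum_{n=1}^{\tau}\frac{h/2-Z_n}{h/2}\right)
=\frac{1}{2h}E\left[AB\right]
=\frac{1}{2h}\Bigl[(2h-1-k-EZ_\tau)\,E\tau-\mathrm{Var}(Z_\tau)\Bigr]
\end{equation*}
(the second equality holds because $A$ and $B$ are affine in $Z_\tau$ with covariance exactly $-\mathrm{Var}(Z_\tau)$) is precisely your formula $\frac{E\tau}{2}-\frac{\mathrm{Var}(Z_\tau)}{2h}+O(1)$. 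The genuine gap in your proposal is the one you flag yourself: you never prove $\mathrm{Var}(Z_\tau)=O(h)$, or even $o(h^{3/2})$, and without it nothing is concluded, since the guaranteed main term is only $E\tau/2\ge(\sqrt h-1)/4$ and an uncontrolled variance of order $h^{3/2}$ would swamp it. Lemmas \ref{Lem02} and \ref{Lem03} control only first moments of $Z_\tau$, so this second-moment overshoot estimate follows from nothing quoted; your renewal decomposition with the uniform bound $w_i\le Ch\,q_i$ is a reasonable route (and the bound is true: the jump variance $2(i+1)=O(h)$ handles states $i$ near the level, and the geometric decay of the tails of $\rho(i,\cdot)$ handles $i$ far below it), but until it is carried out the proof is incomplete.

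What your bookkeeping actually exposes is that the paper's own write-up elides the same term. The paper passes from the displayed identity to
\begin{equation*}
E\left(\sum_{n=1}^{\tau}\frac{h/2-Z_n}{h/2}\right)\ \ge\ \frac{1}{2h}\left(2h-1-\frac{h-1}{2}-c_2\sqrt h-\frac{h-\sqrt h}{2}\right)c_1\sqrt h,
\end{equation*}
i.e.\ it lower-bounds $E[AB]$ by (a bound on) $E[A]$ times the a.s.\ lower bound $c_1\sqrt h\le B$. Since $A$ is not a.s.\ nonnegative (a large overshoot of $Z_\tau$ above $\frac{h-1}{2}$ makes it negative), and since $A$ and $B$ are negatively correlated --- so that in fact $E[AB]\le E[A]E[B]$, the inequality running the wrong way for this purpose --- that step is not justified as written; it is precisely where the term $-\mathrm{Var}(Z_\tau)/(2h)$ gets dropped. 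So you have not manufactured a spurious difficulty: both your argument and the paper's need the estimate $E\bigl[(Z_\tau-\frac{h-1}{2})^2\bigr]=O(h)$, the paper tacitly assumes it, and supplying it along the lines you sketch is what would turn either write-up into a complete proof.
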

\noindent {\bf Proof.} Let $M_n=\sum_{s=1}^{n}(Z_s-s)-n(Z_n-n)$, and let $\mathcal{F}_n=\sigma(Z_0,Z_1,...,Z_n)$. By the proof of \cite[Lemma 3.1]{DS18}, we know that $(M_n)$ is a martingale. By the optional stopping theorem, we get that
$$ E\left(\sum_{n=1}^{\tau_{\frac{h-1}{2}}}(Z_n-n)\right)=
E\tau_{\frac{h-1}{2}}(Z_{\tau_{\frac{h-1}{2}}}-\tau_{\frac{h-1}{2}}),
$$
and thus
\begin{eqnarray}\label{4.1}
E\left(\sum_{n=1}^{\tau_{\frac{h-1}{2}}}\frac{h/2-Z_n}{h/2}\right)
&=&E\tau_{\frac{h-1}{2}}-\dfrac{E\left(\sum_{n=1}^{\tau_{\frac{h-1}{2}}}(Z_n-n)+n\right)}
{h/2}\nonumber\\
&=&E\tau_{\frac{h-1}{2}}-\dfrac{E\tau_{\frac{h-1}{2}}(Z_{\tau_{\frac{h-1}{2}}}-
\tau_{\frac{h-1}{2}})}{h/2}-\dfrac{E\frac{(1+\tau_{\frac{h-1}{2}})
\tau_{\frac{h-1}{2}}}{2}}{h/2}\nonumber\\
&=&\left(1-\frac{1}{h}\right)E\tau_{\frac{h-1}{2}}-\frac{2}{h}
E\left[\tau_{\frac{h-1}{2}}Z_{\tau_{\frac{h-1}{2}}}-\frac{1}{2}\tau_{\frac{h-1}{2}}^2\right].
\end{eqnarray}
Define the process $M_n'=-\frac{1}{4}Z_n^2+nZ_n-\frac{1}{2}n^2+\frac{1}{4}n$. By the proof of \cite[Lemma 3.1]{DS18}, we know that $(M_n')$ is a martingale.
Applying the optional stopping theorem to $(M_n')$ at $\tau_{\frac{h-1}{2}}$,
we get
\begin{eqnarray}\label{4.2}
E\left[\tau_{\frac{h-1}{2}}Z_{\tau_{\frac{h-1}{2}}}-\frac{1}{2}\tau_{\frac{h-1}{2}}^2\right]
=E\left[\frac{1}{4}Z_{\tau_{\frac{h-1}{2}}}^2-\frac{1}{4}
\tau_{\frac{h-1}{2}}\right]-\frac{Z_0^2}{4}
=\frac{1}{4}E\left[Z_{\tau_{\frac{h-1}{2}}}^2-\tau_{\frac{h-1}{2}}\right]-\frac{Z_0^2}{4}.
\end{eqnarray}
Combining \eqref{4.1}, \eqref{4.2} and Lemma \ref{Lem02}, we get
\begin{eqnarray}
&&E\left(\sum_{n=1}^{\tau_{\frac{h-1}{2}}}\frac{h/2-Z_n}{h/2}\right)
=\left(1-\frac{1}{2h}\right)E\tau_{\frac{h-1}{2}}-\frac{1}{2h}
\left(EZ_{\tau_{\frac{h-1}{2}}}^2-Z_0^2\right)\nonumber\\
&&= \left(1-\frac{1}{2h}\right)E\left[Z_{\tau_{\frac{h-1}{2}}}-Z_0\right]-
\frac{1}{2h}E\left[(Z_{\tau_{\frac{h-1}{2}}}-Z_0)(Z_{\tau_{\frac{h-1}{2}}}+Z_0)\right]\nonumber\\
&&= \frac{1}{2h}E\left[\left(2h-1-(Z_{\tau_{\frac{h-1}{2}}}+Z_0)\right)
(Z_{\tau_{\frac{h-1}{2}}}-Z_0)\right].
\end{eqnarray}
Obviously $Z_{\tau_{\frac{h-1}{2}}}-Z_0\geq \frac{h-1}{2}-\frac{h-\sqrt{h}}{2}\geq c_1\sqrt{h}$. Then by Lemmas \ref{Lem02} and \ref{Lem03}, we obtain
\begin{eqnarray}
E\left(\sum_{n=1}^{\tau_{\frac{h-1}{2}}}\frac{h/2-Z_n}{h/2}\right)\geq \frac{1}{2h}\cdot\left(2h-1-\frac{h-1}{2}-c_2\sqrt{h}-\frac{h-\sqrt{h}}{2}\right)\cdot c_1\sqrt{h}\geq c\sqrt{h}.
\end{eqnarray}\hfill\fbox

\smallskip

\begin{pro}\label{First Moment}
There exists $c>0$ such that $EN_H\geq c\log H$ for all
$H\in [50,\infty)$.
\end{pro}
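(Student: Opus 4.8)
The plan is to bound $EN_H$ from below by the triple sum
\[
EN_H=\sum_{h=8}^{H}\sum_{k\in K_{2h}}\sum_{x=2}^{\infty}P\big(A_{x,2h}^{(k)}\big),
\]
obtained from \eqref{N_H} by linearity, and to show that each level $h$ contributes an amount of order $1/h$, so that summation over $h\in\{8,\dots,H\}$ produces the harmonic growth $\log H$. First I would carry out the innermost sum over $x$ using the Ray--Knight representation \eqref{3.11} with parameter $2h$ in place of $h$, so that the auxiliary branching chains run against the barrier level $h$. Writing $g(m):=P(Y_n<h\text{ for all }n\geq 1\mid Y_0=m)$ for the probability that the critical Galton--Watson chain never reaches $h$, and using that $(Y_n)$ (started at $Y_{-1}=k$) is independent of $(Z_n,Y_n')$ while $Y'$ starts from $Z_{x-1}$, the representation \eqref{3.11} factors as
\[
P\big(A_{x,2h}^{(k)}\big)=q_k\cdot E\big[\mathbf{1}\{Z_n<h,\ 1\le n\le x-2\}\,g(Z_{x-1})\mid Z_0=k\big],
\]
where $q_k:=P(Y_0=Y_1=Y_2=h,\,Y_n<h\ \forall n\ge 3\mid Y_{-1}=k)=\pi(k,h)\,\pi(h,h)^2\,g(h)$ collects the three top transitions and the survival of the right branch. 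Since $\{Z_n<h,\ 1\le n\le x-2\}=\{\tau_h\ge x-1\}$, setting $m=x-1$ telescopes the $x$-sum into
\[
\sum_{x=2}^{\infty}P\big(A_{x,2h}^{(k)}\big)=q_k\,E\Big[\sum_{m=1}^{\tau_h}g(Z_m)\,\Big|\,Z_0=k\Big].
\]

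The next step is the lower bound on the bracketed expectation. The key survival estimate is that $g(m)\ge (h-m)/h$ for every $m\ge 0$: for $m\ge h$ this is trivial since $g\ge 0$, while for $m<h$ it follows from optional stopping, because $(Y_n)$ is a nonnegative martingale that dies out almost surely, so the probability of ever reaching level $h$ from $m$ is at most $m/h$. Applying this termwise gives
\[
E\Big[\sum_{m=1}^{\tau_h}g(Z_m)\,\Big|\,Z_0=k\Big]\ge E\Big[\sum_{m=1}^{\tau_h}\frac{h-Z_m}{h}\,\Big|\,Z_0=k\Big]\ge c\sqrt h,
\]
where the final inequality is Lemma \ref{Lem-First Moment} applied with parameter $2h$; indeed $\tau_{(2h-1)/2}=\tau_h$, the summand becomes $\tfrac{h-Z_n}{h}$, and $K_{2h}$ is precisely the admissible range for $Z_0=k$.

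It remains to bound the prefactor $q_k$ and to sum. By Lemma \ref{Lem01}(1) applied with parameter $2h$, whose window $(h-5\sqrt{2h},\,h+5\sqrt{2h})$ contains both $K_{2h}$ and the point $h$, the transitions $\pi(k,h)$ and $\pi(h,h)$ are each at least $c\,h^{-1/2}$ (the finitely many small $h$ being absorbed into $c$). Combined with the barrier estimate $g(h)\ge c\,h^{-1/2}$, this yields $q_k\ge c\,h^{-2}$ uniformly in $k\in K_{2h}$, whence $\sum_{x}P(A_{x,2h}^{(k)})\ge c\,h^{-3/2}$. Since $K_{2h}$ contains at least $c\sqrt h$ integers, summing over $k$ gives $\sum_{k\in K_{2h}}\sum_{x}P(A_{x,2h}^{(k)})\ge c/h$, and therefore
\[
EN_H\ge c\sum_{h=8}^{H}\frac1h\ge c'\log H\qquad(H\ge 50).
\]

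The routine parts are the Ray--Knight factorization and the transition estimates from Lemma \ref{Lem01}. The hard part will be the two survival estimates for the critical Galton--Watson chain. The bound $g(m)\ge (h-m)/h$ away from the barrier is clean via optional stopping, but the matching lower bound $g(h)\ge c\,h^{-1/2}$ at the barrier is more delicate: I would derive it from the one-step identity $g(h)=\sum_{j<h}\pi(h,j)g(j)\ge h^{-1}E[(h-Y_1)^+\mid Y_0=h]$ together with $E[(h-Y_1)^+\mid Y_0=h]=\tfrac12 E|Y_1-h|\ge c\sqrt h$, the last bound coming from the central limit theorem and uniform integrability for the centered sum $Y_1-h$ of $h$ i.i.d.\ finite-variance offspring. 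Throughout, the main bookkeeping subtlety is to keep every constant uniform in $k$ and $h$ and to remember that after the change of parameter in \eqref{3.11} it is the level $h$ (not $2h$) that governs the branching chains.
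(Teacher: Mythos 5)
Your proposal is correct, and its skeleton coincides with the paper's proof: expand $EN_H$ through the Ray--Knight representation \eqref{3.11} (with $2h$ in place of $h$), factor off the $Y$-branch by independence, telescope the sum over $x$ into an additive functional of $(Z_n)$ run until $\tau_h$, invoke Lemma \ref{Lem-First Moment} to get a contribution of order $\sqrt h$ per level, bound the transition factors by Lemma \ref{Lem01}(1), and sum the resulting $c/h$ over $h$ to produce $c\log H$. The one genuinely different ingredient is your treatment of the barrier-survival probability $g(h)=P(Y_n<h\ \forall n\geq 1\mid Y_0=h)$. The paper never isolates $g(h)$: it forces $Y_3$ into the window $(h-5\sqrt{2h},\,h-\tfrac{\sqrt{2h}}{2})$, so that each of the $\asymp\sqrt h$ admissible landing points $m$ contributes $\pi(h,m)\cdot\tfrac{h-m}{h}\gtrsim h^{-1/2}\cdot h^{-1/2}$ by Lemma \ref{Lem01}(1) and the optional-stopping (gambler's ruin) bound, yielding the same aggregate $h^{-1/2}$. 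You instead use the exact one-step identity $g(h)\geq\tfrac1h E[(h-Y_1)^+\mid Y_0=h]=\tfrac{1}{2h}E|Y_1-h|$ together with $E|Y_1-h|\geq c\sqrt h$ from the CLT with uniform integrability (a moment comparison such as $E|X|\geq(EX^2)^{3/2}/(EX^4)^{1/2}$ would also do); this is more self-contained, at the cost of an extra anticoncentration estimate, while the paper's route reuses a lemma already on the table. Two bookkeeping remarks: first, since \eqref{3.6} represents $\xi_D(\cdot,T_U(x-1,k+1))$ by $\Delta_{x-1}^{(k)}$, the process $Y'$ really starts from $Z_{x-2}$, so the telescoped sum is $\sum_{n=0}^{\tau_h-1}g(Z_n)$ rather than your $\sum_{n=1}^{\tau_h}g(Z_n)$; this is exactly the discrepancy the paper absorbs via $Z_0\leq Z_{\tau_{h-1/2}}$ (its \eqref{Pro first 02-a}), and either version feeds into Lemma \ref{Lem-First Moment}, so nothing breaks. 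Second, your final sum should formally run over $50\leq h\leq H$ (where Lemma \ref{Lem01} applies, i.e.\ $2h\geq 100$); dropping $8\leq h<50$ is harmless since all terms are nonnegative, as you indicate.
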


\noindent {\bf Proof.} By the Ray-Knight representation, (\ref{N_H}) and (\ref{3.11}), we know that
\begin{eqnarray*}
EN_H&=&
\sum_{h=8}^{H}\sum_{k\in K_{2h}}P\left(Y_0^{(k)}=Y_1^{(k)}=Y_2^{(k)}=h,\left\{Y_n^{(k)}<h,\forall n\geq 3\right\}\right)\\
&&\cdot
\sum_{x=2}^{+\infty}P\left(\left\{Z_n^{(k)}<h-\frac{1}{2},1\leq n\leq x-2\right\},\left\{Y_n^{'(k)}<h, \forall n\geq 1\right\}\right).
\end{eqnarray*}
It follows that
\begin{eqnarray}\label{Pro first 01}
&&EN_H\nonumber\\
&\geq &
\sum_{h=50}^{H}\sum_{k\in K_{2h}}P\left(Y_0^{(k)}=Y_1^{(k)}=Y_2^{(k)}=h,\left\{Y_3^{(k)}\in \left(h-5\sqrt{2h}, h-\frac{\sqrt{2h}}{2}\right),Y_n^{(k)}<h,\forall n\geq 4\right\}\right)\nonumber\\
&&\cdot
\sum_{x=2}^{+\infty}P\left(\left\{Z_n^{(k)}<h-\frac{1}{2},1\leq n\leq x-2\right\},\left\{Y_n^{'(k)}<h, n\geq 1\right\}\right)\nonumber\\
&=&
\sum_{h=50}^{H}\sum_{k\in K_{2h}}\pi(k,h)\cdot\pi(h,h)
\cdot\pi(h,h)\cdot\sum_{m\in(h-5\sqrt{2h},h-\frac{\sqrt{2h}}{2})}
\pi(h,m) P\left(Y_n^{(m)}<h,\forall n\geq 1\right)\nonumber\\
&&\cdot
\sum_{x=2}^{+\infty}P\left(\tau_{h-\frac{1}{2}}\geq x-1,\left\{Y_n^{'(k)}<h,\forall n\geq 1\right\}\right).
\end{eqnarray}
By Lemma \ref{Lem01}, all the $\pi(\cdot,\cdot)$'s in the display above are of the order $h^{-\frac{1}{2}}$. Since $Y_n$ is a martingale,
applying the optional stopping theorem at $\sigma_{h}\land \omega$,
where $\sigma_{h}$ and $\omega$ are defined in \eqref{tingshi}, we
get for $m\in(h-5\sqrt{2h},h-\frac{\sqrt{2h}}{2})$,
\begin{eqnarray}\label{4.6}
P\left(Y_n^{(m)}<h,\forall n\geq 1\right)=P\left(Y_n^{(m)}~\mbox{hits }0\mbox{ before exceeds }h\right)\geq \dfrac{h-m}{h}\geq c_3h^{-\frac{1}{2}}.
\end{eqnarray}
The last two inequalities hold since
\begin{eqnarray*}
m&=&
EY_{\sigma_{h}\land\omega}=E\left(Y_{\sigma_{h}}
\textbf{1}_{\{\omega\geq\sigma_{h}\}}\right)
+E\left(Y_{\omega}\textbf{1}_{\{\omega<\sigma_{h}\}}\right)\\
&\geq&E\left(Y_{\sigma_{h}}
\textbf{1}_{\{\omega\geq\sigma_{h}\}}\right)\\
&\geq&
h\left(1-P\left(Y_n^{(m)}~\mbox{hits }0\mbox{ before exceeds }h\right)\right),
\end{eqnarray*}
which implies that
\begin{eqnarray}\label{4.7}
P\left(Y_n^{(m)}~\mbox{hits }0\mbox{ before exceeds }h\right)\geq \dfrac{h-m}{h}\geq c_3h^{-\frac{1}{2}}.
\end{eqnarray}
By (\ref{Pro first 01}) and (\ref{4.6}), we get
\begin{eqnarray}\label{Pro first 02}
EN_H\geq c_4
\sum_{h=50}^{H}\sum_{k\in K_{2h}}h^{-2}\cdot \sum_{x=2}^{+\infty}P\left(\tau_{h-\frac{1}{2}}\geq x-1,\left\{Y_n^{'(k)}<h,\forall n\geq 1\right\}\right).
\end{eqnarray}
By the independence of the processes in the Ray-Knight representation, we have
\begin{eqnarray*}
&&\sum_{x=2}^{+\infty}
P\left(\tau_{h-\frac{1}{2}}\geq x-1,\left\{Y_n^{'(k)}<h,\forall n\geq 1\right\}\right)\\
&&\ge\sum_{x=2}^{+\infty}
\sum_{l=0}^{h-1}P\left(Z_n^{(k)}<h-\frac{1}{2}\mbox{ for } 1\leq n\leq x-2,Z_{x-1}=l\right)\cdot P\left(Y_n^{(l)}~\mbox{hits }0\mbox{ before exceeds }h\right).
\end{eqnarray*}
By the definitions of $(Z_n)_{n\ge 0}$ and $\tau_h$, we obtain
\begin{eqnarray}\label{Pro first 02-a}
Z_0-Z_{\tau_{h-\frac{1}{2}}}\leq h-\frac{\sqrt{2h}}{2}-h+\frac{1}{2}=\frac{1-\sqrt{2h}}{2}\leq 0.
\end{eqnarray}
Similar to (\ref{4.7}), we have
$P(Y_n^{(l)}~\mbox{hits }0\mbox{ before exceeds }h)\geq \frac{h-l}{h}$,
which together with (\ref{Pro first 02-a}) and  Lemma \ref{Lem-First Moment} implies that
\begin{eqnarray}\label{Pro first 03}
&&\sum_{x=2}^{+\infty}
P\left(\tau_{h-\frac{1}{2}}\geq x-1,\left\{Y_n^{'(k)}<h,\forall n\geq 1\right\}\right)\nonumber\\
&&\geq\sum_{x=2}^{+\infty}
\sum_{l=0}^{h}P\left(\tau_{h-\frac{1}{2}}\geq x-1,Z_{x-2}=l\right)\cdot\frac{h-l}{h}\nonumber\\
&&=
E\left(\sum_{l=0}^{h}\sum_{x=2}^{\tau_{h-\frac{1}{2}}+1}
\frac{h-l}{h}\cdot\textbf{1}_{\{Z_{x-2}=l\}}\right)
=E\left(\sum_{n=0}^{\tau_{h-\frac{1}{2}}-1}\frac{h-Z_n}{h}\right)\nonumber\\
&&\ge
E\left(\sum_{n=0}^{\tau_{h-\frac{1}{2}}-1}\frac{h-Z_n}{h}\right)+
\left(\frac{h-Z_{\tau_{h-\frac{1}{2}}}}{h}-\frac{h-Z_0}{h}\right)\nonumber\\
&&=
E\left(\sum_{n=1}^{\tau_{h-\frac{1}{2}}}\frac{h-Z_n}{h}\right)
\geq c_5\sqrt{h}.
\end{eqnarray}
By (\ref{Pro first 02}) and (\ref{Pro first 03}), we obtain
\begin{eqnarray*}
EN_H\geq c_4
\sum_{h=50}^{H}\sum_{k\in K_{2h}}h^{-2}\cdot c_5\sqrt{h}\geq c_6\sum_{h=100}^{H}h^{-1}\geq c\log H.
\end{eqnarray*}
The proof is complete.\hfill\fbox

\subsection{Upper bound on the second moment}

In this subsection, we will give an upper bound on the second moment $EN_H^2$ following \cite{DS18}. If we use the $N_H$ defined in \eqref{N_H}--\eqref{3.1},
we could not prove the counterparts of \cite[Lemmas 3.3, 3.4]{DS18}.
To overcome this, we give a variant $\widetilde{N}_H$ of $N_H$ with $N_H\le \widetilde{N}_H$ a.s. In the remainder of this section, we assume $x\ge 2$ unless
explicitly mentioned otherwise.

For any positive integer $h$, we define
\begin{eqnarray}\label{4.2-a}
A_{x,2h}&:=&\left\{\mathcal{K}\left(T_D(x-1,h)\right)=\{x,x+1,x+2\},
L\left(x,T_D(x-1,h)\right)=2h\right\}
\end{eqnarray}
and
\begin{eqnarray*}
\widetilde{N}_H:=\sum_{h=50}^{H}\sum_{x=2}^{\infty}\textbf{1}_{A_{x,2h}},
\quad
\widetilde{N}:=\lim_{H\to\infty}\widetilde{N}_H=
\sum_{h=50}^{\infty}\sum_{x=2}^{\infty}\textbf{1}_{A_{x,2h}}.
\end{eqnarray*}

We claim that
\begin{eqnarray}\label{4.2-b}
\cup_{k\in K_{2h}}A_{x,2h}^{(k)}\subset A_{x,2h},
\end{eqnarray}
where $A_{x,2h}^{(k)}$ is defined by (\ref{3.1}).
Suppose that $\omega\in A_{x,2h}^{(k)}$ for some $k\in K_{2h}$. Define
$$
\tilde{T}(\omega):=\{m<T_U(x-1,k+1)(\omega): S_m=x-1,S_{m-1}=x,\mathcal{K}(m)=\{x,x+1,x+2\},
L(x,m)=2h\}.
$$
By the definition of $A_{x,h}^{(k)}$, we know that
$\tilde{T}(\omega)$ consists of one unique element $t(\omega)$.
Further by (\ref{UDcrossings}), we get that
$t(\omega)=T_D(x-1, h)(\omega)$. Hence $\omega\in A_{x,2h}$ and thus (\ref{4.2-b}) holds.

By (\ref{4.2-b}), we get that $N_H\leq\widetilde{N}_H$ a.s.
Now we study the second moment of $\widetilde{N}_H$ following  \cite[Section 3.2]{DS18}.

Let $D(n)=\big(\xi_D(x, n),x\in\mathbb{Z}\big)\in\mathbb{N}^{\mathbb{Z}}$ be the random vector that records the number of downcrossings of each site by time $n$.
For $l\in \mathbb{N}^{\mathbb{Z}}$, we use $l(i),i\in\mathbb{Z}$ to denote the $i$-th component of $l$. For $l\in \mathbb{N}^{\mathbb{Z}}$, define $B_x(l)=\{\exists n<\infty:D(n)=l,S(n-1)=x,S(n)=x-1\}$. Note that if $B_x(l)$ happens, there exists a unique $n\in\mathbb{N}$, such that $D(n)=l,S(n-1)=x$ and $S(n)=x-1$.

Let $\mathcal{P}=\{l:P(B_x(l))>0~\mbox{for some}~x\}$. For $\mathcal{Q}\subset\mathcal{P}$, denote $B_x(\mathcal{Q})=\bigcup_{l\in\mathcal{Q}}B_x(l)$. Then by virtue of (\ref{UDcrossings}),  we have
$A_{x,2h}=B_x(\mathcal{P}_{x,2h})$, where $\mathcal{P}_{x,2h}$ is the set  of $l\in\mathcal{P}$ such that
\begin{eqnarray*}
l(x-1)=l(x)=l(x+1)=
h,\ l(i)<h\ \mbox{for all}\ i\neq x-1,x,x+1.
\end{eqnarray*}

Let $\mathcal{A}$ be the family of all subsets of $\mathcal{P}$. For any $x\in\mathbb{Z}$, we define a map $\varphi_x:\mathcal{P}\mapsto\mathcal{A}$ by
\begin{eqnarray*}
\varphi_x(l):=\{l^*\in\mathcal{P}:l^*(i)<l(i)~\mbox{for}~i=x,x+1,l^*(i)=l(i)~\mbox{for}~i\neq x,x+1\}.
\end{eqnarray*}

Following the argument of \cite[Lemma 3.3]{DS18}, we can prove

\begin{lem}\label{Second Moment01}
Suppose $x_1, x_2\in \mathbb{Z}$ and $h$ is a positive integer.
If $l_i^*\in \varphi_{x_i}(l_i)$ with $l_i\in\mathcal{P}_{x_i,2h}$, $i=1,2$,
we have that $B_{x_1}(l_1^*)\cap B_{x_2}(l_2^*)=\emptyset$, if $(x_1,l_1)\neq(x_2,l_2)$. Further, we have $B_{x_1}(l_1^*)\cap B_{x_2}(l_2^*)=\emptyset$ if $(x_1,l_1)=(x_2,l_2)$ but $l_1^*\neq l_2^*$.
\end{lem}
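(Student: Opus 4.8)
The plan is to combine the coordinatewise monotonicity of the downcrossing record $n\mapsto D(n)$ with the observation that every $l^{*}\in\varphi_{x}(l)$ arising from $l\in\mathcal{P}_{x,2h}$ has a very rigid shape. Indeed, if $l\in\mathcal{P}_{x,2h}$ then $l(x-1)=l(x)=l(x+1)=h$ and $l(i)<h$ otherwise; since $\varphi_x$ keeps all coordinates except $x,x+1$ and strictly lowers those two, any $l^{*}\in\varphi_{x}(l)$ satisfies $l^{*}(x-1)=h$ while $l^{*}(i)<h$ for every $i\neq x-1$. Thus $l^{*}$ attains its maximum, which equals $h$, at the single site $x-1$; likewise $l_2^{*}$ attains its maximum $h$ only at $x_2-1$. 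The fact that both indices use the \emph{same} $h$ is what makes the two maximal values coincide, and this will be used crucially.

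Next comes the dynamical input. Because $\xi_D(\cdot,n)$ is nondecreasing in $n$, any two profiles realized along a single trajectory are comparable. So if $\omega\in B_{x_1}(l_1^{*})\cap B_{x_2}(l_2^{*})$ with witnessing times $n_1,n_2$, I would order them, say $n_1\le n_2$, to get $l_1^{*}=D(n_1)\le D(n_2)=l_2^{*}$ coordinatewise. Comparing the $(x_1-1)$-coordinate yields $h=l_1^{*}(x_1-1)\le l_2^{*}(x_1-1)\le h$, hence $l_2^{*}(x_1-1)=h$; but $l_2^{*}$ reaches $h$ only at $x_2-1$, forcing $x_1=x_2=:x$.

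Now I would use that $B_x(l^{*})$ pins down a \emph{deterministic} stopping time. In $B_x(l^{*})$ the witnessing step satisfies $S(n-1)=x,\ S(n)=x-1$, i.e.\ it is precisely the downcrossing at $x-1$ that raises $\xi_D(x-1,\cdot)$ to $l^{*}(x-1)=h$; hence $n=T_D(x-1,h)$. Applying this to both events (with the common $x$ and common $h$) gives $n_1=T_D(x-1,h)=n_2$, so that $l_1^{*}=D(n_1)=D(n_2)=l_2^{*}$. Finally, from $(x,l^{*})$ one recovers $l$ uniquely, since $l(i)=l^{*}(i)$ for $i\neq x,x+1$ and $l(x)=l(x+1)=h$; therefore $x_1=x_2$ together with $l_1^{*}=l_2^{*}$ forces $l_1=l_2$. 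Reading this contrapositively proves both assertions at once: a nonempty intersection is possible only when $(x_1,l_1,l_1^{*})=(x_2,l_2,l_2^{*})$, so the events are disjoint whenever $(x_1,l_1)\neq(x_2,l_2)$, and also whenever $(x_1,l_1)=(x_2,l_2)$ but $l_1^{*}\neq l_2^{*}$.

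I expect the only delicate point to be the bookkeeping that both events occur at the \emph{identical} stopping time $T_D(x-1,h)$, that is, correctly reading $B_x(l^{*})$ as the statement ``$D$ equals $l^{*}$ at the $h$-th downcrossing of the site $x-1$,'' and making sure the ``unique maximum, equal to $h$, at $x-1$'' description is invoked in both the site-matching step and the profile-matching step. Everything else reduces to the monotonicity of $D$ and the fact that $l$ is a function of $(x,l^{*})$.
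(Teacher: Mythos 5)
Your proof is correct and is essentially the argument the paper has in mind: the paper gives no details here, citing instead the argument of \cite[Lemma 3.3]{DS18}, and your combination of the coordinatewise monotonicity of $n\mapsto D(n)$, the fact that each $l^*\in\varphi_x(l)$ attains its maximum value $h$ only at the coordinate $x-1$, and the identification of the witnessing time as the deterministic stopping time $T_D(x-1,h)$ (which forces $n_1=n_2$, hence $l_1^*=l_2^*$ and, by reconstruction, $l_1=l_2$) is exactly the adaptation of that argument to the downcrossing setting. No gaps.
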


The following result is the counterpart of \cite[Lemma 3.4]{DS18}.

\begin{lem}\label{Second Moment02}
There exists a constant $c>0$ such that for any
$x\ge 2, h\ge 50, l\in\mathcal{P}_{x,2h}$,
\begin{eqnarray*}
P\big(B_x(\varphi_x(l))\big)\geq chP\big(B_x(l)\big).
\end{eqnarray*}
\end{lem}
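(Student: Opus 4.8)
The plan is to translate the event $B_x(\varphi_x(l))$ into the language of the Ray-Knight branching representation and then compare it, state by state, with $B_x(l)$. Recall that $l\in\mathcal{P}_{x,2h}$ forces $l(x-1)=l(x)=l(x+1)=h$, while $l(i)<h$ elsewhere, and that $\varphi_x(l)$ lowers the two coordinates $l(x)$ and $l(x+1)$ below $h$ while leaving all other coordinates fixed. Under \eqref{3.6}, the vector $(\xi_D(y,T_D(x-1,h)),y\in\mathbb{Z})$ is governed by the patched process $\Delta$; the event $B_x(l)$ pins the profile down to the single configuration $l$, and $B_x(\varphi_x(l))$ keeps all coordinates at their $l$-values except at $x$ and $x+1$, where it merely requires the values to be strictly smaller than $h$ (equivalently, strictly smaller than $l(x)=l(x+1)=h$).

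First I would write $P(B_x(l))$ as a product of transition probabilities along the branching chain, isolating the three consecutive factors that govern the transitions into and out of the sites $x$ and $x+1$; concretely, the factor carrying the value at site $x-1$ (equal to $h$) up to site $x$, then site $x$ to site $x+1$, then site $x+1$ to site $x+2$. Since $l(x-1)=l(x)=l(x+1)=h$, these are transitions of the form $\pi(h,h)$ (or the appropriate $\rho,\rho^*$ variant near the origin), each of order $h^{-1/2}$ by Lemma~\ref{Lem01}(1). The remaining factors, involving coordinates indexed outside $\{x-1,x,x+1\}$, are identical for $B_x(l)$ and for every configuration in $B_x(\varphi_x(l))$, so they cancel in the ratio. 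Thus the comparison reduces to bounding below, by a constant times $h$, the ratio of the sum over admissible lowered pairs $(l^*(x),l^*(x+1))$ of the corresponding three transition factors to the single product of $\pi(h,h)^{\,\cdot}$ type factors appearing in $P(B_x(l))$.

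Next I would lower-bound the sum $P(B_x(\varphi_x(l)))$ by restricting $l^*(x)=l^*(x+1)=j$ to the window $j\in(\tfrac12(2h-10\sqrt{2h}),h)$, i.e.\ values within $O(\sqrt h)$ below $h$ but strictly less than $h$. On this window Lemma~\ref{Lem01}(1) guarantees each of the three relevant transition factors is of order $h^{-1/2}$, and there are of order $\sqrt h$ admissible values of $j$. Summing, $P(B_x(\varphi_x(l)))$ picks up an extra multiplicative factor of order $\sqrt h\cdot\sqrt h = h$ relative to the single configuration $P(B_x(l))$, which is exactly the claimed bound. Care is needed at the boundary of the window to ensure each lowered configuration genuinely lies in some $\varphi_x(l)$ (it does, since we only require $l^*(x),l^*(x+1)<h=l(x)$ and equality elsewhere) and to handle the near-origin cases where $\pi$ is replaced by $\rho$ or $\rho^*$ via the shift identities in \eqref{TP}.

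The main obstacle I anticipate is the bookkeeping of which transition factors actually change between $l$ and $l^*\in\varphi_x(l)$, and verifying that the ``environment'' factors truly cancel. Because the profile is built by patching a $Z$-chain, a $Y$-chain and a $Y'$-chain at the junction near $x-1$, one must check that lowering only the coordinates at $x$ and $x+1$ affects precisely the transitions $h\to l^*(x)$, $l^*(x)\to l^*(x+1)$, and $l^*(x+1)\to l(x+2)$, and nothing downstream, so that the uncountably-many-looking comparison collapses to a finite local computation. Once that localization is established, the estimate is a direct application of Lemma~\ref{Lem01}(1) together with the counting of $O(\sqrt h)$ values, mirroring \cite[Lemma~3.4]{DS18}, with the only genuine modification being the use of $T_D(x-1,h)$ and the downcrossing representation in place of the site-based quantities of \cite{DS18}.
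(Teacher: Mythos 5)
Your overall route is the same as the paper's: cancel the transition factors away from $\{x,x+1\}$, reduce everything to a ratio of three local transition probabilities, and sum over a window of lowered configurations. However, there is a genuine flaw in the counting step. You restrict the sum to configurations with $l^*(x)=l^*(x+1)=j$ for a \emph{single} index $j$ ranging over a window of length of order $\sqrt{h}$. That produces only $O(\sqrt{h})$ configurations, and each of them satisfies only the constant-order bound $P\big(B_x(l^*)\big)\geq c\,P\big(B_x(l)\big)$ (the ratio of the three local factors is bounded below by a constant, not by $\sqrt{h}$). Summing these events therefore yields only $P\big(B_x(\varphi_x(l))\big)\geq c\sqrt{h}\,P\big(B_x(l)\big)$, which falls short of the claim; your arithmetic ``$\sqrt{h}\cdot\sqrt{h}=h$'' has no source once the two coordinates are tied together on the diagonal. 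The repair is what the paper actually does: let $l^*(x)$ and $l^*(x+1)$ range \emph{independently} over a window such as $[h-\frac{\sqrt{2h}}{2},h)$, giving on the order of $h/2$ admissible pairs, each still with ratio bounded below by a constant; the corresponding events $B_x(l^*)$ are pairwise disjoint by Lemma \ref{Second Moment01} (a lemma your write-up never invokes, though writing $P\big(B_x(\varphi_x(l))\big)$ as a sum of the $P\big(B_x(l^*)\big)$ requires exactly this disjointness), and summing then produces the desired factor $h$.

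A secondary inaccuracy: the third factor $\pi\big(l^*(x+1),l(x+2)\big)$ is \emph{not} of order $h^{-1/2}$ in general, because $l(x+2)$ can be far below $h$ (even $0$, in which case this factor is exponentially small), so Lemma \ref{Lem01}(1) alone does not control it. What saves the ratio is that the denominator carries the matching factor $\pi\big(h,l(x+2)\big)$: when $l(x+2)<l^*(x+1)<h$, the monotonicity in Lemma \ref{Lem01}(3) gives $\pi\big(l^*(x+1),l(x+2)\big)>\pi\big(h,l(x+2)\big)$, and when $l(x+2)$ is itself within $O(\sqrt{h})$ of $h$ one falls back on Lemma \ref{Lem01}(1). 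So part (3) of Lemma \ref{Lem01} is an essential ingredient, not an optional refinement; the paper cites it for precisely this purpose.
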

\noindent {\bf Proof.} We consider
$l^*\in \varphi_x(l)$ such that $l^*(x)\in [h-\frac{\sqrt{2h}}{2},h)$ and $l^*(x+1)\in [h-\frac{\sqrt{2h}}{2},h)$.
According to Lemma \ref{Lem01} $(1)$ and $(3)$, there is a constant $c>0$ such that
\begin{eqnarray*}
\dfrac{P\big(B_x(l^*)\big)}{P\big(B_x(l)\big)}=
\dfrac{\pi\big(h,l^*(x)\big)\cdot\pi\big(l^*(x),l^*(x+1)\big)\cdot\pi\big(l^*(x+1),l(x+2)\big)}
{\pi(h,h)\cdot\pi(h,h)\cdot\pi(h,l(x+2))}
\geq 4c.
\end{eqnarray*}
Note that
there are about $h/2$ of such $l^*\in\varphi_x(l)$
that satisfy the above inequality. Thus
by Lemma \ref{Second Moment01}, we get that
$
P\big(B_x(\varphi_x(l))\big)\geq chP\big(B_x(l)\big).
$\hfill\fbox

\smallskip

With Lemmas \ref{Second Moment01} and \ref{Second Moment02} in hand,
we can follow the proof of \cite[Proposition 3.5]{DS18} to get the following result. We omit the details.

\begin{pro}\label{Second Moment}
We have that $E\widetilde{N}_H^2=O(\log H)\cdot E\widetilde{N}_H$.
\end{pro}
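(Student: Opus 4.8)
\textbf{Proof proposal for Proposition \ref{Second Moment}.}

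The plan is to follow the second-moment method of \cite[Proposition 3.5]{DS18}, using Lemmas \ref{Second Moment01} and \ref{Second Moment02} as the two structural inputs. First I would expand
\begin{eqnarray*}
E\widetilde{N}_H^2=\sum_{h_1=50}^{H}\sum_{h_2=50}^{H}\sum_{x_1=2}^{\infty}\sum_{x_2=2}^{\infty}
P\big(A_{x_1,2h_1}\cap A_{x_2,2h_2}\big),
\end{eqnarray*}
and split the sum according to whether the two favorite-edge blocks are ``far apart'' or ``close.'' Recalling $A_{x,2h}=B_x(\mathcal{P}_{x,2h})$, the diagonal-type contributions (same $h$, nearby $x$, or the genuinely diagonal term $(x_1,h_1)=(x_2,h_2)$) must be controlled separately from the off-diagonal contributions. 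For the off-diagonal terms the idea is to use the Markov / Ray-Knight structure so that the joint event $B_{x_1}(l_1)\cap B_{x_2}(l_2)$ factorizes, after conditioning, into a product of one-block probabilities plus a ``bridge'' connecting the two blocks; here one orders $x_1<x_2$ and runs the $Z$-chain from the first block across to the second.

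The key quantitative step is to convert the factor $P(B_x(l))$ for a block event at height $h$ into the much larger quantity $P(B_x(\varphi_x(l)))$, which by Lemma \ref{Second Moment02} gains a factor of order $h$. The point is that $\{B_x(\varphi_x(l)):l\in\mathcal{P}_{x,2h}\}$ is, by Lemma \ref{Second Moment01}, a disjoint family whose events are themselves favorite-edge-compatible configurations at a strictly lower height. This disjointness lets one sum $\sum_{l}P(B_x(l))\le (ch)^{-1}\sum_l P(B_x(\varphi_x(l)))\le (ch)^{-1}$, so each block costs at most $(ch)^{-1}$ rather than the crude $h^{-3/2}$ bound one would read off from three $\pi(\cdot,\cdot)$ factors. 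Carrying this through for both blocks, together with the lower bound $E\widetilde{N}_H\ge EN_H\ge c\log H$ from Proposition \ref{First Moment}, is what produces the target estimate $E\widetilde{N}_H^2=O(\log H)\cdot E\widetilde{N}_H$: one block's worth of summation over $x$ and $h$ reproduces (a constant times) $E\widetilde{N}_H$, and the remaining block contributes the $O(\log H)$ factor after summing $\sum_{h}h^{-1}=O(\log H)$.

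The main obstacle I expect is bounding the cross term where the two heights $h_1,h_2$ are comparable and the two locations $x_1,x_2$ are close, since there the two blocks can interact and the clean product factorization fails; here one must argue that such near-coincidences are rare enough, using the disjointness from Lemma \ref{Second Moment01} to prevent double counting and using Lemma \ref{Lem01} to bound the transition probabilities connecting the two blocks. Because the present $\widetilde{N}_H$ is defined via the hitting times $T_D(x-1,h)$ rather than the $T_U$-based $N_H$, I would also need to check that the replacement introduced to make $N_H\le\widetilde{N}_H$ does not disturb the Ray-Knight factorization used in \cite{DS18}; this is precisely why $\widetilde{N}_H$ was introduced, so I expect the argument of \cite[Proposition 3.5]{DS18} to go through essentially verbatim once Lemmas \ref{Second Moment01} and \ref{Second Moment02} are in place, and the bulk of the remaining work is the careful but routine bookkeeping of the four-fold sum. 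For this reason the details can reasonably be omitted, as the authors indicate.
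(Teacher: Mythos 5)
Your proposal takes essentially the same route as the paper: the paper's entire proof of Proposition \ref{Second Moment} consists of invoking Lemmas \ref{Second Moment01} and \ref{Second Moment02} and then following the second-moment argument of \cite[Proposition 3.5]{DS18}, with all details omitted. Your outline correctly identifies the key mechanism --- the factor-$h$ gain from the map $\varphi_x$ (Lemma \ref{Second Moment02}) combined with the disjointness of the events $B_x(\varphi_x(l))$ (Lemma \ref{Second Moment01}), so that each height-$h$ layer costs $O(h^{-1})$, one block's summation reproduces $E\widetilde{N}_H$, and the other contributes $\sum_h h^{-1}=O(\log H)$ --- which is exactly the argument the paper points to, including the observation that $\widetilde{N}_H$ (via $T_D$) was introduced precisely so that these two lemmas are available.
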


\begin{cor}\label{cor-4.6}
We have $EN_H^2=O(\log^2 H).$
\end{cor}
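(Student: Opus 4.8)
The plan is to derive Corollary \ref{cor-4.6} by combining three ingredients already established in the excerpt: the relation $N_H \le \widetilde{N}_H$ a.s.\ from \eqref{4.2-b}, the first-moment lower bound $EN_H \ge c\log H$ from Proposition \ref{First Moment}, and the second-moment estimate $E\widetilde{N}_H^2 = O(\log H)\cdot E\widetilde{N}_H$ from Proposition \ref{Second Moment}. The key missing piece is an \emph{upper} bound on $E\widetilde{N}_H$, since Proposition \ref{Second Moment} only controls $E\widetilde{N}_H^2$ in terms of $E\widetilde{N}_H$. First I would establish that $E\widetilde{N}_H = O(\log H)$ by a direct Ray-Knight computation: by definition $E\widetilde{N}_H = \sum_{h=50}^H \sum_{x=2}^\infty P(A_{x,2h})$, and using the representation \eqref{3.6} together with the transition-probability bounds in Lemma \ref{Lem01}, one sees that the probability of three consecutive favorite edges at level $2h$ carries a factor of order $h^{-1/2}$ from requiring the chain to hit height $h$ (matching the first-moment analysis), while summing over $x$ contributes a factor of order $\sqrt{h}$ as in \eqref{Pro first 03}. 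These combine to give a per-$h$ contribution of order $h^{-1}$, so $E\widetilde{N}_H = O(\log H)$.

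With that upper bound in hand, Proposition \ref{Second Moment} immediately yields
\begin{eqnarray*}
E\widetilde{N}_H^2 = O(\log H)\cdot E\widetilde{N}_H = O(\log H)\cdot O(\log H) = O(\log^2 H).
\end{eqnarray*}
The final step is to transfer this bound from $\widetilde{N}_H$ to $N_H$. Since $N_H \le \widetilde{N}_H$ a.s.\ and both are nonnegative, we have $N_H^2 \le \widetilde{N}_H^2$ a.s., and therefore $EN_H^2 \le E\widetilde{N}_H^2 = O(\log^2 H)$, which is exactly the claimed bound.

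The main obstacle I anticipate is establishing the matching upper bound $E\widetilde{N}_H = O(\log H)$ cleanly, since the lower bound $EN_H \ge c\log H$ in Proposition \ref{First Moment} only bounds $N_H$ from below and does not directly give an upper bound on the larger quantity $\widetilde{N}_H$. The delicate point is controlling the sum over $x$ of $P(A_{x,2h})$: one must verify that the contribution is genuinely of order $\sqrt{h}$ rather than larger, which amounts to checking that the immigration chain $Z_n^{(k)}$ reaches height $h - 1/2$ in time of order $\sqrt{h}$ on average (Lemma \ref{Lem02}) while the excursion chains $Y^{(k)}$ and $Y'^{(k)}$ stay below $h$. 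Once the factors $h^{-1/2}$ (from the three $\pi(\cdot,\cdot)$ factors forcing the climb to height $h$, net of the summation gain) and $\sqrt{h}$ (from the $x$-sum) are isolated and shown to combine to $h^{-1}$, summing over $h$ from $50$ to $H$ produces the harmonic-series bound $O(\log H)$, and the rest is routine.
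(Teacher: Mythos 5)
Your overall skeleton (first bound $E\widetilde{N}_H$, then multiply via Proposition \ref{Second Moment}, then transfer through $N_H\le\widetilde{N}_H$) would be logically valid, but the ingredient you yourself flag as the ``key missing piece'' --- the upper bound $E\widetilde{N}_H=O(\log H)$ --- is exactly what your sketch fails to establish, and it is nowhere in the paper. Two concrete problems. First, the event $A_{x,2h}$ in \eqref{4.2-a} is defined at the downcrossing time $T_D(x-1,h)$, whereas the representation \eqref{3.6} you invoke is a Ray--Knight theorem at the upcrossing time $T_U(x-1,k+1)$; to compute $\sum_x P(A_{x,2h})$ you would need a Ray--Knight representation at $T_D(x-1,h)$, together with a summation over the left boundary value $\xi_D(x-2)$, which for $\widetilde{N}_H$ ranges over all of $[0,h)$ rather than being pinned to the window $K_{2h}$ as it is for $N_H$. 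None of this machinery is set up in the paper, so it is genuinely new work, not a ``direct computation.'' Second, your bookkeeping is internally inconsistent: a probability factor $h^{-1/2}$ multiplied by an $x$-sum factor $h^{1/2}$ gives $O(1)$ per level $h$, hence only $E\widetilde{N}_H=O(H)$, not $O(\log H)$. To actually get $O(h^{-1})$ per level one must extract $h^{-3/2}$ from the right-hand chain (two transitions $\pi(h,h)\asymp h^{-1/2}$ by Lemma \ref{Lem01}, plus a probability $\asymp h^{-1/2}$ of subsequently staying below $h$ forever), and then show that the $x$-sum, averaged over the left boundary value, is $O(\sqrt{h})$ via Lemmas \ref{Lem02} and \ref{Lem03}. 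This is plausibly all true, but as written it is an unproven claim whose stated justification does not add up.

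The paper's proof shows that no such estimate is needed: since $E\widetilde{N}_H\le\bigl(E\widetilde{N}_H^2\bigr)^{1/2}$ by the Cauchy--Schwarz inequality, Proposition \ref{Second Moment} self-improves to
\begin{equation*}
E\widetilde{N}_H^2 \;\le\; O(\log H)\cdot\bigl(E\widetilde{N}_H^2\bigr)^{1/2},
\end{equation*}
whence $\bigl(E\widetilde{N}_H^2\bigr)^{1/2}=O(\log H)$, i.e. $E\widetilde{N}_H^2=O(\log^2 H)$; then $N_H\le\widetilde{N}_H$ a.s. gives $EN_H^2\le E\widetilde{N}_H^2=O(\log^2 H)$. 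You had every piece of this one-line bootstrap in hand; the observation you missed is simply that the first moment is dominated by the square root of the second moment, which closes the loop with no new Ray--Knight computation and no first-moment upper bound at all.
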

\noindent {\bf Proof.} By Proposition \ref{Second Moment} and the Cauchy-Schwarz inequality, we get
$$
E\widetilde{N}_H^2\leq O(\log H)\cdot \left(E\widetilde{N}_H^2\right)^{1/2},
$$
which implies that
$$
E\tilde{N}_H^2=O(\log^2 H).
$$
Since $N_H\leq \tilde{N}_H$ a.s., the desired result follows immediately.
\hfill\fbox

\subsection{0-1 law}

Recall that $N=\lim_{H\to \infty}N_H$. First, we show that $N=\infty$ with positive probability.

\begin{pro}\label{pro-4.7}
There exists a constant $\delta>0$ such that
$P(N=\infty)\geq\delta$.
\end{pro}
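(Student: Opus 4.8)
The plan is to use the second moment method (Paley--Zygmund inequality) together with the moment estimates already established. By Proposition \ref{First Moment} we have the lower bound $EN_H\geq c\log H$, and by Corollary \ref{cor-4.6} we have the upper bound $EN_H^2=O(\log^2 H)$. These two estimates are precisely what is needed to control $P(N_H>0)$ from below uniformly in $H$, which is the route to showing that $N=\lim_{H\to\infty}N_H$ is infinite with positive probability.

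Concretely, I would first apply the Paley--Zygmund inequality to the nonnegative random variable $N_H$: for $\theta\in(0,1)$,
\begin{eqnarray*}
P\big(N_H>\theta\, EN_H\big)\geq (1-\theta)^2\frac{(EN_H)^2}{EN_H^2}.
\end{eqnarray*}
Taking, say, $\theta=1/2$ and combining with the bounds $EN_H\geq c\log H$ and $EN_H^2\leq C\log^2 H$ gives
\begin{eqnarray*}
P\Big(N_H>\tfrac{c}{2}\log H\Big)\geq \frac14\cdot\frac{(c\log H)^2}{C\log^2 H}=\frac{c^2}{4C}=:\delta>0,
\end{eqnarray*}
a lower bound that does not degrade as $H\to\infty$. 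In particular $P(N_H\geq 1)\geq P(N_H>\tfrac{c}{2}\log H)\geq\delta$ for all $H$ large enough (this uses $\tfrac{c}{2}\log H\geq 1$, i.e. $H$ past some threshold; for the finitely many smaller $H$ the claim about $N$ is unaffected).

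Next I would pass from $N_H$ to $N$. Since $N_H$ is nondecreasing in $H$ (it is a sum of indicators over an increasing index range) and $N_H\uparrow N$, the events $\{N_H\geq 1\}$ are increasing in $H$, so $\{N=\infty\}\supset\bigcap_H\{N_H\geq 1\}$ is not quite immediate; instead I would argue that $\{N\geq 1\}=\bigcup_H\{N_H\geq 1\}$ and more carefully that for each fixed $m$, $P(N\geq m)=\lim_{H\to\infty}P(N_H\geq m)$. The cleaner route: the event $\{N<\infty\}$ forces $N_H$ to stabilize at a finite value, whereas the uniform lower bound $P(N_H>\tfrac{c}{2}\log H)\geq\delta$ shows $N_H$ exceeds any fixed level with probability at least $\delta$ for all large $H$; hence $P(N=\infty)\geq \limsup_{H}P(N_H>\tfrac{c}{2}\log H)\geq\delta$, using that $\{N_H>\tfrac{c}{2}\log H\}\subset\{N>\tfrac{c}{2}\log H\}$ and letting the threshold tend to infinity along the monotone limit.

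The main obstacle is the final limiting step rather than the inequality itself: one must ensure that the positive probability of $N_H$ being large for each individual $H$ transfers to a positive probability that the limit $N$ is genuinely infinite, not merely large but finite. This is handled by reverse Fatou or by the monotone convergence structure, since for any fixed level $L$ we have $P(N\geq L)\geq\limsup_H P(N_H\geq L)\geq\delta$ once $\tfrac{c}{2}\log H\geq L$, and letting $L\to\infty$ yields $P(N=\infty)=\lim_{L\to\infty}P(N\geq L)\geq\delta$. I expect the Paley--Zygmund application to be routine given the two moment bounds, with the only care needed being this monotone passage to the limit and the bookkeeping of constants.
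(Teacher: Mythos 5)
Your proposal is correct and takes essentially the same approach as the paper: the paper's argument is precisely the Paley--Zygmund inequality written out via a Cauchy--Schwarz split, with the threshold $\log\log H$ in place of your $\tfrac{c}{2}\log H$, followed by the same monotone passage $P(N\geq L)\geq\delta$ for every $L$ and then $L\to\infty$. Both proofs use exactly the two ingredients Proposition \ref{First Moment} and Corollary \ref{cor-4.6}, so they are interchangeable.
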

\begin{proof}
By the Cauchy-Schwarz inequality, we get that
\begin{eqnarray}\label{pro-4.7-a}
EN_H&=&EN_H\textbf{1}_{\{N_H>\log\log H\}}+EN_H\textbf{1}_{\{N_H\leq\log\log H\}}\nonumber\\
&\leq&\sqrt{EN_H^2\cdot P(N_H>\log\log H)}+\log\log H.
\end{eqnarray}
Combining this with Proposition \ref{First Moment} and Corollary \ref{cor-4.6},
we get that there exist constants $c$, $\delta>0$ such that
\begin{eqnarray*}
P(N_H>\log\log H)\geq \dfrac{(EN_H-\log\log H)^2}{EN_H^2}\geq c\dfrac{\log^2H}{EN_H^2}\geq\delta,
\end{eqnarray*}
for all sufficiently large $H$. Letting $H\to\infty$, we get that $P(N=\infty)\geq\delta$.
\end{proof}

Recall the definition of $\tilde{U}(n)$ in Theorem \ref{thm-2.1}.
By Theorem \ref{thm-2.1}, we know that
\begin{eqnarray}\label{Favorite edge infty}
\liminf_{n\to\infty}\dfrac{\tilde{U}(n)}{\sqrt{n}(\log n)^{-\gamma}}=\infty
\qquad a.s.,
\end{eqnarray}
where $\gamma>11$.

Using Proposition \ref{pro-4.7} and (\ref{Favorite edge infty}), following the argument of \cite[Section 3.3]{DS18} and applying Kolmogorov's 0-1 law, we obtain that
$$
P(f(3)=\infty)=1,
$$
which completes the proof of Theorem \ref{mainthm}.

\section{Remark}

In Section 2,  we used the transience of the favorite downcrossing site process to show the transience of the favorite edge process. In fact, from Proposition \ref{pro-2.4},
%we have seen that there are close relation
we can see that there is a close relation
between favorite edges and favorite downcrossing sites. A natural question arise:

{\it How about the number of favorite downcrossing sites of one-dimensional simple symmetric random walk?}

In \cite{HHMS22}, we will consider this question and prove the following result.

\begin{thm}
%For one-dimensional simple random walk,
For a  one-dimensional simple symmetric random walk,
with probability 1 there are only finitely many times at which there are at least four favorite downcrossing sites and three favorite downcrossing sites occurs infinitely often.
\end{thm}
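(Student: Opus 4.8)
The plan is to prove the two assertions separately, reusing as much of the present paper's machinery as possible. Write $\mathcal{U}(n)=\{x\in\mathbb{Z}:\xi_D(x,n)=\xi_D^*(n)\}$ for the set of favorite downcrossing sites, as in Section 2.3, and set $g(r):=\#\{n\ge 1:\#\mathcal{U}(n)=r\}$. We must show that $g(r)<\infty$ a.s. for every $r\ge 4$ and that $g(3)=\infty$ a.s.

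For the lower bound $g(3)=\infty$, I would transcribe the first-moment/second-moment scheme of Section 4, which is possible almost verbatim because the Ray--Knight representation of Section 3.2 is stated directly in terms of the downcrossing field $(\xi_D(y,\cdot))_{y}$ rather than the edge local times. Using the inverse downcrossing times $T_D(x,h)$, I would introduce, for $h\ge 50$, a suitable window of starting heights $k$, and $x\ge 2$, the events
\[
\widehat A_{x,h}^{(k)}:=\bigl\{\mathcal{U}(T_D(x,h))=\{x,x+1,x+2\},\ \xi_D(x,T_D(x,h))=h\bigr\},
\]
and set $\widehat N_H:=\sum_{h}\sum_{k}\sum_{x}\mathbf 1_{\widehat A_{x,h}^{(k)}}$, so that $g(3)\ge \widehat N:=\lim_{H\to\infty}\widehat N_H$. (In contrast to the edge case there is no parity constraint here, since $\xi_D$ takes all integer values, which slightly simplifies the bookkeeping.) Translating $P(\widehat A_{x,h}^{(k)})$ into a branching-process event exactly as in (\ref{3.11}) --- three consecutive values of the chains pinned at the running maximum $h$ and every other value strictly below --- the first-moment bound $E\widehat N_H\ge c\log H$ would follow from the optional-stopping identity of Lemma \ref{Lem-First Moment} together with the local estimate Lemma \ref{Lem01}, and the second-moment bound $E\widehat N_H^2=O(\log H)\cdot E\widehat N_H$ from the disjointness Lemma \ref{Second Moment01} and the ratio estimate Lemma \ref{Second Moment02}, possibly after passing to a dominating variant of $\widehat N_H$ as was done for $\widetilde N_H$ in Section 4.2. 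The Paley--Zygmund inequality then yields $P(\widehat N=\infty)\ge\delta>0$, and --- this is where the present paper pays off directly --- Proposition \ref{pro-2.5} is \emph{exactly} the transience of the favorite downcrossing site process $U(n)$, so the Kolmogorov $0$--$1$ law argument of Section 4.3 upgrades positive probability to probability one, giving $g(3)=\infty$ a.s.

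For the upper bound $g(r)<\infty$ for $r\ge 4$, I would adapt the ``no more than three'' argument of T\'oth and T\'oth--Werner to the downcrossing field. The idea is to inspect $\mathcal{U}(n)$ only at the successive inverse-local-time instants at which the running maximum $\xi_D^*$ is about to increase, to describe the downcrossing profile at those instants by the critical Galton--Watson-with-immigration chains $Z_n,R_n$ and the chain $Y_n$ of Section 3.2, and to bound the probability that four sites are simultaneously tied at the current maximal level $h$. Pinning a chain at a prescribed level costs a factor $\pi(i,j)\le Ch^{-1/2}$ by Lemma \ref{Lem01}(2), while Lemma \ref{Lem01}(3) supplies the monotonicity needed to compare configurations; a four-way tie at level $h$ then carries an extra such factor relative to a three-way tie, which should make the contribution of level $h$ summable over a dyadic decomposition of the levels. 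Borel--Cantelli would then show that four-way ties occur at only finitely many levels, hence at only finitely many times.

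The main obstacle is the upper bound. The first- and second-moment half is essentially a translation of Section 4; the only genuine care needed there is to define the events so that the disjointness Lemma \ref{Second Moment01} and the first-moment lower bound hold simultaneously --- precisely the tension that forced the passage from $N_H$ to $\widetilde N_H$ in Section 4.2. By contrast, the ``at most three'' statement has no ready-made analogue in the excerpt and requires reproving T\'oth-type estimates in the downcrossing setting: one must track how the set of tied leaders can change as the maximum grows and verify that the boundary corrections $\mathbf 1_{\{0<x\le S_n\}}-\mathbf 1_{\{S_n<x\le 0\}}$ appearing in (\ref{UDcrossings}) do not corrupt the branching-process bookkeeping. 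Obtaining a bound on the probability of a four-way tie that is uniform in $h$ and summable is the delicate step, and I expect it to require the most work.
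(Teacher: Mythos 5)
First, a point of order: the paper you were asked to match does \emph{not} prove this theorem. It is stated in Section 5 as an announcement, with the proof deferred to the companion paper \cite{HHMS22} (``In preparation''). So there is no in-paper proof to compare against, and your proposal has to be judged on its own. On that basis, the second half of your plan (that $g(3)=\infty$ a.s.) is sound, and in fact it is essentially already a corollary of what the paper proves: on the event $A_{x,2h}^{(k)}$ of \eqref{3.1}, the five-case computation in Section 3.3 shows that $\xi_D(\cdot,\tilde n)$ equals $h$ exactly at the three sites $x-1,x,x+1$ and is strictly smaller elsewhere, so $\#\mathcal{U}(\tilde n)=3$ at each such time; since distinct triples $(x,h,k)$ whose events occur produce distinct times, $N=\infty$ a.s.\ (Sections 4.1--4.3) already gives infinitely many times with exactly three favorite downcrossing sites, and Proposition \ref{pro-2.5} is, as you say, precisely the transience input needed for the $0$--$1$ law. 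Your rewriting in terms of $T_D$ and $\mathcal{U}$ is a harmless repackaging of this.

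The genuine gap is the first half of the theorem: that with probability $1$ there are only finitely many times with at least four favorite downcrossing sites. Here your proposal offers only a heuristic, not a proof. The observation that each additional site pinned at the running maximum $h$ costs a factor $O(h^{-1/2})$ by Lemma \ref{Lem01}(2) is the same factor-counting intuition that motivates T\'oth's theorem for favorite sites \cite{To01}, but converting it into a theorem is the entire content of that (long) paper: one must decompose over the random times at which the maximum is attained or increased, control \emph{all} configurations of tied sites (not just three or four consecutive ones), handle the immigration chains near the origin and the boundary corrections $\textbf{1}_{\{0<x\leq S_n\}}-\textbf{1}_{\{S_n<x\leq 0\}}$ of \eqref{UDcrossings}, and then sum the resulting bounds over positions, levels and configurations so that Borel--Cantelli applies. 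None of these steps is carried out in your proposal, and you acknowledge as much (``requires reproving T\'oth-type estimates\dots I expect it to require the most work''). Since the claimed summability ``over a dyadic decomposition of the levels'' is exactly the delicate point, the upper-bound assertion remains unproved; as it stands, your argument establishes only the $g(3)=\infty$ half of the statement.
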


\bigskip

{ \noindent {\bf\large Acknowledgments}\ \  This work was supported by the National Natural Science Foundation of China
(Grant Nos. 12171335, 12101429,  12071011, 11931004 and  11871184), the Simons Foundation (\#429343) and the Science Development Project of Sichuan University (2020SCUNL201).

\end{document}